\newtheorem{thm}{Theorem}[section]
\newtheorem{prop}[thm]{Proposition}
\newtheorem{lem}[thm]{Lemma}
\theoremstyle{remark}
\newtheorem{remark}[thm]{Remark}
\def \ud {\,\textnormal{d}}
\newcommand{\argsup}[1]{\underset{#1}{\textnormal{arg sup }}}
\definecolor{lgray}{gray}{0.9}
\definecolor{llgray}{gray}{0.95}
\definecolor{lllgray}{gray}{0.975}
\newcommand{\mybox}[2]{
\begin{center}
\begin{minipage}[t]{\dimexpr\textwidth}
\begin{center}
\fcolorbox{black}{lgray}{%
\begin{minipage}[t]{\dimexpr\textwidth-4\fboxsep-4\fboxrule}
#1
\end{minipage}
}
\fcolorbox{black}{llgray}{%
\begin{minipage}[t]{\dimexpr\textwidth-4\fboxsep-4\fboxrule}
#2
\end{minipage}
}
\end{center}
\end{minipage}
\end{center}
\vspace{11pt}
}
\newcommand{\rA}{{\rm A}}
\newcommand{\rC}{{\rm C}}
\newcommand{\rM}{{\rm M}}
\newcommand{\rR}{{\rm R}}
\newcommand{\rU}{{\rm U}}
\newcommand{\rV}{{\rm V}}
\newcommand{\rL}{{\rm L}}
\newcommand{\rI}{{\rm I}}
\newcommand{\rh}{{\rm h}}
\newcommand{\rr}{{\rm r}}
\newcommand{\rv}{{\rm v}}
\newcommand{\ru}{{\rm u}}
\newcommand{\re}{{\rm e}}
\newcommand{\real}{\mathbb R}
\newcommand{\err}{{\tt err}}
\newcommand{\tol}{{\tt tol}}
\newcommand{\Oo}{{\Omega_{\tt x}}}
\newcommand{\Ot}{{\Omega_{\tt y}}}
\newcommand{\Oot}{\Omega_{\tt x}^{\tt train}}
\newcommand{\Ott}{\Omega_{\tt y}^{\tt train}}
\newcommand{\Oop}[1]{{\Omega_{\tt x}^{#1}}}
\newcommand{\Otp}[1]{{\Omega_{\tt y}^{#1}}}
\newcommand{\Inter}{I}
\newcommand{\gInter}{J}
\newcommand{\MInter}{{\rm I}}
\newcommand{\proj}{{ P}}
\newcommand{\argmin}[1]{\operatorname*{arg\,min}_{#1}}
\newcommand{\argmax}[1]{\operatorname*{arg\,max}_{#1}}
\newcommand{\myspan}[1]{\mbox{span}\{#1\}}
\newcommand{\cN}{M}
\newcommand{\VN}{\mathbb V_\cN}
\newcommand{\VQ}{\mathbb V_Q}
\newcommand{\WQ}{\mathbb W_Q}
\newcommand{\Vq}{\mathbb V_q}
\newcommand{\Vpod}{\mathbb V_Q}
\newcommand{\myinclude}[2]{{\includegraphics{pics/ellipses_a_N10_Ex3}}}
\title[]{Comparison of some reduced representation approximations}
\author{Mario Bebendorf}
\address{Mario Bebendorf - Institute for Numerical Simulation, University of Bonn, Wegelerstra{\ss}e 6, 53115 Bonn, Germany}
\author{Yvon Maday} 
\address{Yvon Maday - UPMC Univ. Paris 06, UMR 7598 LJLL, Paris, F-75005 France; \\  Institut Universitaire de France and \\ Division of Applied Mathematics, Brown
  University, Providence, RI, USA} 
\author{Benjamin Stamm}
\address{Benjamin Stamm - UPMC Univ. Paris 06, UMR 7598 LJLL, Paris, F-75005 France; \\ CNRS, UMR 7598 LJLL, Paris, F-75005 France}
\begin{document}
\maketitle

\begin{abstract}
In the field of numerical approximation, specialists considering highly complex problems have recently proposed various ways to simplify their underlying problems. 
In this field, depending on the problem they were tackling and the community that are at work, different approaches have been developed with some success and have even gained some maturity, the applications can now be applied to information analysis or for numerical simulation of PDE's. 
At this point, a crossed analysis and effort for understanding the similarities and the differences between these approaches that found their starting points in different backgrounds is of interest. It is the purpose of this paper to contribute to this effort by comparing some constructive reduced representations of complex functions. We present here in full details the Adaptive Cross Approximation (ACA) and the Empirical Interpolation Method (EIM) together with other approaches that enter in the same category.
\end{abstract}

\section{Introduction}

This paper deals with the economical representation of {\it dedicated} sets of data, that are currently --- and more and more importantly --- available stemming out of various experiences or given by formal expressions. The amount of information that can be derived out of a given massive set of data is far much smaller than the size of the data itself, therefore, parallel to the increasing size of data acquisition and storage available on computer architectures, an effort for post processing and economically represent, analyze and derive pertinent information out of the data has been done during the last century. The main idea starts from the translation of the fact that the data are {\it dedicated} to some phenomenon and thus, there exists a certain amount  of {\it coherence} in these data which can be separated into two classes: deterministic or  statistical. Among them have been proposed: regularity, sparsity, small $n$-width etc.\ that can be either assumed, verified or proven.

The data themselves can be known in different ways, either (i) completely explicitly, like for instance (i-1) from an analytic representation or at least access to the values at every point, (i-2) or only given on a large set of points, (i-3) or also given through various global measures like moments, or (ii) given implicitly through a model like a partial differential equation (PDE). The range of  applications is huge, examples can be found in statistics, image and information process, learning process, experiments in mechanics, meteorology, earth sciences, medicine, biology, etc.\ and the challenge is in computationally processing such a large amount of high-dimensional
data so as to obtain low-dimensional descriptions and capture much of the phenomena of interest.

\bigskip

We consider the following problem formulation: Let us assume that we are given a (presumably large) set $\mathcal F$ of functions $\varphi\in \mathcal F$ defined over $\Oo\subset \mathbb R^{d_x}$ (with $d_x\ge1$). Our aim is to find some functions $h_1, h_2,\dots, h_Q:\Oo\to \mathbb R$ such that every $\varphi\in \mathcal F$ can be well approximated as follows
\[
	\varphi(x) \approx \sum_{q=1}^Q \hat \varphi_q  h_q(x),
\]
where $Q\ll\hbox{dim($\myspan{\mathcal F}$)}$.
As said above, the ability for $\mathcal F$ to posses this property is an assumption. It is precisely stated under the notion of small Kolmogorov $n$-width, defined as follows:

Let $\mathcal F$ be a subset of some Banach space $\mathcal X$ and $\VQ$ be a generic $Q$-dimensional subspace of $\mathcal X$. 
The angle between $\mathcal F$ and $\VQ$ is 
\[
	E(\mathcal F;\VQ) := \sup_{\varphi\in \mathcal F} \inf_{v_Q\in \VQ} \|\varphi-v_Q\|_{\mathcal X}.
\]
The Kolmogorov $n$-width of $\mathcal F$ in $\mathcal X$ is given by
\[
	d_Q(\mathcal F,\mathcal X):= \inf\{E(\mathcal F;\VQ)\,|\,\VQ\mbox{ a $Q$-dimensional subspace of } \mathcal X \}.
\]
\noindent The $n$-width of $\mathcal F$ thus measures to what extent the set $\mathcal F$ can be approximated by a $n$-dimensional subspace of $\mathcal X$.

This assumption of small Kolmogorov $n-$width can be taken for granted, but there are also reasons on the elements of $\mathcal F$ that can lead to such a smallness such as regularity of the functions $\varphi\in \mathcal F$. As an example, we can quote, in the periodic settings, the well-known Fourier series. Small truncated Fourier series are good approximations of the full expansion if the decay rate of the Fourier coefficients is fast enough, i.e.\ if the functions $\varphi$ have enough continuous derivatives. In this case, the basis is actually multipurpose since it is not dedicated to the particular set $\mathcal F$. Fourier series are indeed adapted to any set of regular enough functions, the more regular they are, the better the approximation is.
Another property for $\mathcal F$ to have a small Kolmogorov $n$-width is that it satisfies the principle of {\it transform sparsity}, i.e., we assume that the functions $\varphi\in \mathcal F$ are expressed in a {\it sparse} way when written in some orthonormal basis set $\{\psi_i\}$, e.g.\ an orthonormal wavelet basis, a Fourier basis, or a local Fourier basis, depending
on the application: this means that the coefficients $\hat \varphi_i = \langle \varphi, \psi_i\rangle$ satisfy, for some $p$, $0<p<2$, and some $R$:
$$\| \varphi \|_{\ell^p} = \Big(\sum_i |\hat \varphi_i  |^p\Big)^{1/p}\le R.$$
A key implication of this assumption is that if we denote by $\varphi_N$ the sum of the $N$ largest contributions then
$$\exists C(R,p),\ \forall \varphi\in \mathcal F, \quad\|\varphi - \varphi_N\|_{\ell^2} \le C(R,p) (N+1)^{1/2-1/p},$$
i.e.\ there exists a contracted representation of such a $\varphi$. Note that the representation is adaptive and tuned to each $\varphi$ (it is what is called a nonlinear approximation). However, under these assumptions, the theory of compressed sensing (see \cite{donoho2006compressed}), at the price of having a slight degradation of the convergence rate, allows to propose a non-adaptive recovery that is almost optimal. We refer to \cite{donoho2006compressed} and the references therein for more details on this question. Anyway, these are cases where the set of basis functions $\{h_i\}$ does not constitute a multipurpose approximation set, all the contrary: it is tuned to that choice of $\mathcal F$ and will not have any good property for another one.

The difficulty is of course to find the basis set $\{h_i\}$. Note additionally that, from the definition of the small Kolmogorov $n$-width, except in a Hilbertian framework, the optimal elements need not even be in $\myspan{\mathcal F}$.
\bigskip

Let us proceed and propose a way  to better identify the various elements in $\mathcal F$: we consider that they are parametrized with $y \in\Ot \subset \mathbb R^{d_y}$ (with $d_y\ge1$), so that $\mathcal F$ consists of the  parametrized functions $f:\Oo\times\Ot\to \real$. 
In what follows, we denote the function $f$ as a function of $x$ for some fixed parameter value $y$ as $f_y := f(\cdot,y)$.  
However, the role of $x$ and $y$ could be interchanged and both $x$ and $y$ will be considered equally as variables of the same level or as variable and parameter in all what follows.

In this paper, we present a survey of algorithms that search for an affine decomposition of the form
\begin{equation}
	\label{eq:Appr}
	f(x,y) \approx \sum_{q=1}^Q g_q(y) h_q(x).
\end{equation}
We focus on the case where the decomposition is chosen in an optimal way (in terms of sparse representation) and additionally we focus on methods with minimal computational complexity. It is assumed that we have a priori some or all the knowledge on functions $f$ in $\mathcal F$, i.e.\ they are not implicitly defined by a PDE. In that ``implicit'' case there exists a family of reduced modeling approaches such as the reduced basis method; see e.g.~\cite{patera2007reduced}.

Note that the domains $\Oo$ and $\Ot$ can be with finite cardinality $M$ and $N$, in which case the functions can be written as matrices, then, the above algorithms can often be stated as a low-rank approximation: Given a matrix $\rM\in\real^{M\times N}$, find
a decomposition of the matrix $\rM$:
\[
	\rM \approx \rU\,\rV^T
\]
where $\rU$ is of size $M\times Q$ and $\rV$ of size $N\times Q$.

In this completely discrete setting, the Singular Value Decomposition~(SVD), or the related Proper Orthogonal Decomposition~(POD), yields an optimal (in terms of approximability with respect to the~$\|\cdot\|_{\ell^2}$-norm) solution, but is rather expensive to compute. 
After presenting the POD in a general setting in Section~2, we present two alternatives, the Adaptive Cross Approximation~(ACA) in Section~3 and the Empirical Interpolation Method~(EIM), in Section~4, which originate from completely different backgrounds.
We give a comparative overview of features and existing results of those approaches which are computationally much cheaper and yield in practice similar approximation results.
The relation between ACA and the EIM is studied in Section~5. 
Section~6 is devoted to a projection method based on incomplete data known as gappy POD or missing point estimation, which in some cases can be interpreted as an interpolation scheme.

\section{Proper orthogonal decomposition}

Let us start by assuming that we have an unlimited knowledge of the data set and that we have unlimited computer resources --- coming back at the end of this section to more realistic matter of facts. The first approach is known under the generic concept of 
Proper Orthogonal Decomposition (POD) which is a mathematical technique that stands at the intersection of various horizons that have actually been developed independently and concomitantly in various disciplines and is thus known under various names, including:

\begin{itemize}
\item  Proper Orthogonal Decomposition (POD): a term used in turbulence;
\item  Singular Value Decomposition (SVD): a term used in algebra;
\item  Principal Component Analysis (PCA): a term used in statistics  for discrete random processes;
\item  the discrete Karhunen-Loeve transform (KLT): a term used in statistics  for continuous random processes;
\item  the Hotelling transform: a term used in  image processing;
\item  Principal Orthogonal Direction (POD): a  term used in  geophysics;
\item  Empirical Orthogonal Functions  (EOFs): a term used in meteorology and geophysics.
\end{itemize}
All these somewhat equivalent approaches  aim at  obtaining low-dimensional
approximate descriptions of high-dimensional processes, therefore eliminating information which
has little impact on the overall understanding.

\subsection{Historical overview}

As stated above, the POD is present under various forms in many contributions. 

The original SVD was established for real-square matrices in
the 1870's by Beltrami and Jordan, for complex square matrices in 1902 by Autonne, and for
general rectangular matrices in 1936 by Eckart and Young; see also the
generalization to unitarily invariant norms by Mirsky \cite{MR22:5639}. The SVD can be viewed as the
extension of the eigenvalue decomposition for the case of non-symmetric matrices and non-square matrices.

The PCA is a statistical technique. The earliest
descriptions of the technique were given by Pearson~\cite{pearson1901} and Hotelling~\cite{hotelling1933analysis}. The
purpose of the PCA is to identify the dependence structure behind a multivariate stochastic
observation in order to obtain a compact description of it. 

Lumley \cite{lumley2007stochastic} traced the idea of the POD back
to independent investigations by Kosambi \cite{kosambi1943statistics}, Lo\`eve~\cite{loeve1945fonctions}, Karhunen~\cite{karhunen1946spektraltheorie},
Pougachev~\cite{pougachev} and Obukhov~\cite{obukhov}. 

These methods aim at providing a set of orthonormal basis functions that allow to express approximately and optimally any function in the data set. The equivalence between all these approaches has been also investigated by many authors, among them \cite {mees1987singular,kunisch1999control,wu2003note}.

\subsection{Algorithm} Let us now present  the POD algorithm in a semi-discrete framework, that is, we consider a finite family of functions $\{f_{y}\}_{y\in\Ott}$ where $f_y:\Oo\to\real$ for each $y\in\Ot =  \Ott$ where $\Ott$ is finite with cardinality $N$. In this context, the goal is to define an approximation  $P_Q[f_y]$ to $f_y$ defined by 
\begin{equation}\label{POD1}
P_Q[f_y](x) = \sum_{q=1}^Q g_q(y)\, h_q(x)
\end{equation}
with $Q\ll N$.
The POD actually incorporates a scalar product, for functions depending on $x\in\Oo$, and the above projection is then an orthogonal projection on the $Q$-dimensional vectorial space $\myspan{h_q, q=1,\dots,Q}$.

The question is now to select properly the functions $h_q$. With a scalar product, orthonormality is useful, since we would like that these modes are selected in order that they carry as much of the information that exists in the $\{f_{y}\}_{y\in\Ott}$, i.e.\ the first function $h_1$ should be selected such that it provides the best one-term approximation similarly, then $h_q$ should be selected so that, with $h_1, h_2,\dots, h_{q-1}$ it gives the best $q$-term approximation. 
The best $q$-term above is understood in the sense that the mean square error over all $y\in\Ott$ is the smallest.
Such specially ordered orthonormal functions are called the proper orthogonal modes for the function $f(x,y)$. With these functions, the expression (\ref{POD1}) is called the POD of $f$.

\mybox{Proper orthogonal decomposition (POD)}
{
	\begin{itemize}
		\item[\tt a.]	
			Let $\Ott=\{\hat y_1,\ldots,\hat y_N\}$ be a $N$-dimensional dicrete representation of $\Ot$.
		\item[\tt b.]	
			Construct the correlation matrix 
			\[
				\rC_{i,j} =  \tfrac{1}{N}\left(f_{\hat y_j},f_{\hat y_i}\right)_{\Oo},\quad  1\le i,j \le N,
			\]
			where $(\cdot,\cdot)_{\Oo}$ denotes a scalar product of functions depending on $\Oo$.
		\item[\tt c.]	
			Then, solve for the $Q$ largest eigenvalue--eigenvector pairs $(\lambda_q,{\rm v}_q)$ such that
			\begin{equation}
				\label{eq:POD_EigPb}
				 \rC \, \rv_q = \lambda_q \rv_q,\quad  1\le q\le Q.
			\end{equation}
		\item[\tt d.]	
			The orthogonal POD basis functions $\{h_1,\ldots,h_Q\}$ such that $\Vpod=\myspan{h_1,\ldots,h_Q}$ are then given by the 			linear combinations
			\[
				h_q(x) = \sum_{n=1}^{N} (\rv_q)_n \,  f(x,\hat y_n),\quad 1\le q\le Q,\quad x\in\Oo,
			\]
			and where $(\rv_q)_n$ denotes the $n$-th coefficient of the eigenvector $\rv_q$. 
	\end{itemize}
	\vspace{6pt}
	{\bf Approximation:}   
	The approximation $P_Q[f_y]$ to $f_y:\Oo\to\real$, for any $y\in\Ot$, is then given by
\[
	P_Q[f_y] (x) = \sum_{q=1}^Q g_q(y)\,h_q(x),\quad x\in\Oo,
\] 
with $g_q(y) = \frac{ (f_y,h_q)_{\Oo} }{ (h_q,h_q)_{\Oo} }$.
}

\begin{prop}
	The approximation error 
	\[
		d_2^{\textnormal{POD}}(Q) = \sqrt{ \frac{1}{N}\sum_{y \in\Ott}\| f_y - P_Q[f_y]\|^2_{\Oo} }
	\]
	minimizes the mean square error  $\sqrt{ \frac{1}{N}\sum_{y \in\Ott}\| f_y - {\mathcal P}_Q[f_y]\|^2_{\Oo} }$ over all projection operators ${\mathcal P}_Q $ onto a space of dimension $Q$. It
	is given by
	\begin{equation}
		\label{eq:POD_err}
		d_2^{\textnormal{POD}}(Q) = \sqrt{\sum_{q=Q+1}^N \lambda_q},
	\end{equation}
	where $\{\lambda_{Q+1},\ldots,\lambda_N\}$ denotes the set of the $N-Q$ smallest eigenvalues of the eigenvalue problem (\ref{eq:POD_EigPb}).
\end{prop}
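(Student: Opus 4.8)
The plan is to recognise the statement as the semidiscrete analogue of the Schmidt--Eckart--Young--Mirsky theorem and to prove it by reducing the minimisation to a finite-dimensional eigenvalue problem through the \emph{method of snapshots}. First I would dispose of the ``all projection operators'' clause: for a fixed $Q$-dimensional range $\mathbb V_Q$, any projection $\mathcal P_Q$ sends $f_y$ into $\mathbb V_Q$, so $\|f_y-\mathcal P_Q f_y\|_{\Oo}\ge \inf_{v\in\mathbb V_Q}\|f_y-v\|_{\Oo}=\|f_y-P_{\mathbb V_Q}f_y\|_{\Oo}$, the orthogonal projection. Hence the infimum over all projections onto $Q$-dimensional spaces is attained by orthogonal projections, and it suffices to optimise over $Q$-dimensional subspaces. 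Picking an orthonormal basis $\{\phi_1,\dots,\phi_Q\}$ of such a subspace and expanding with Parseval's identity gives
\[
\frac1N\sum_{y\in\Ott}\|f_y-P_{\mathbb V_Q}f_y\|_{\Oo}^2=\frac1N\sum_{y\in\Ott}\|f_y\|_{\Oo}^2-\sum_{q=1}^Q\frac1N\sum_{y\in\Ott}\big|(f_y,\phi_q)_{\Oo}\big|^2,
\]
so minimising the error is equivalent to maximising $\sum_{q=1}^Q \frac1N\sum_{y}|(f_y,\phi_q)_{\Oo}|^2$ over orthonormal families.

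Then I would introduce the self-adjoint, positive semidefinite, finite-rank correlation operator $R\phi:=\frac1N\sum_{y\in\Ott}(f_y,\phi)_{\Oo}\,f_y$, so that each term above becomes the Rayleigh quotient $(\phi_q,R\phi_q)_{\Oo}$. The method-of-snapshots observation links $R$ to the matrix $\rC$: writing $\Ott=\{\hat y_1,\dots,\hat y_N\}$ and substituting $h_q=\sum_n(\rv_q)_n f_{\hat y_n}$ into $R$, the identity $(f_{\hat y_n},f_{\hat y_m})_{\Oo}=N\rC_{m,n}$ together with $\rC\rv_q=\lambda_q\rv_q$ yields $R h_q=\lambda_q h_q$; the same computation gives $(h_q,h_p)_{\Oo}=N\lambda_p\,\rv_q^{T}\rv_p$, so for orthonormal eigenvectors $\rv_q$ the functions $h_q$ are orthogonal with $\|h_q\|_{\Oo}^2=N\lambda_q$, and $\psi_q:=h_q/\sqrt{N\lambda_q}$ are orthonormal eigenfunctions of $R$. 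Thus the nonzero spectrum of $R$ coincides with that of $\rC$, ordered $\lambda_1\ge\lambda_2\ge\cdots\ge0$.

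The heart of the argument, which I expect to be the main obstacle, is the constrained maximisation $\max\sum_{q=1}^Q(\phi_q,R\phi_q)_{\Oo}$ over orthonormal families. By the Ky Fan maximum principle (equivalently the Courant--Fischer/Rayleigh--Ritz characterisation for the self-adjoint $R$), this maximum equals $\sum_{q=1}^Q\lambda_q$ and is attained exactly when $\mathrm{span}\{\phi_1,\dots,\phi_Q\}=\mathrm{span}\{\psi_1,\dots,\psi_Q\}=\mathbb V_Q$. This proves at once the optimality of the POD space and that $P_Q[f_y]=\sum_q\frac{(f_y,h_q)_{\Oo}}{(h_q,h_q)_{\Oo}}h_q=\sum_q(f_y,\psi_q)_{\Oo}\psi_q$ is the orthogonal projection onto $\mathbb V_Q$, hence the minimiser. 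To evaluate the residual I would use the trace identity $\frac1N\sum_{y}\|f_y\|_{\Oo}^2=\sum_n\rC_{n,n}=\mathrm{tr}\,\rC=\sum_{q=1}^N\lambda_q$; inserting this and the optimal value into the displayed expansion gives $\big(d_2^{\mathrm{POD}}(Q)\big)^2=\sum_{q=1}^N\lambda_q-\sum_{q=1}^Q\lambda_q=\sum_{q=Q+1}^N\lambda_q$, which is \eqref{eq:POD_err}. The only delicate points are the normalisation factor $N\lambda_q$ in the snapshot correspondence and the degenerate case of linearly dependent snapshots (some $\lambda_q=0$), both handled transparently by the eigenvalue formulation.
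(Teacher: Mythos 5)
The paper states this proposition without proof, treating it as the classical Schmidt--Eckart--Young / method-of-snapshots result, so there is no argument of the authors' to compare yours against. Your proof is correct and is the standard one: the reduction from arbitrary projections to orthogonal projections onto the range, the Parseval expansion of the mean-square error, the identification $Rh_q=\lambda_q h_q$ with $\|h_q\|_{\Oo}^2=N\lambda_q$ linking the correlation operator to the matrix $\rC$, the Ky Fan maximisation giving $\sum_{q=1}^{Q}\lambda_q$, and the trace identity together establish both the optimality claim and \eqref{eq:POD_err}. The only imprecision is the assertion that the maximum is attained \emph{exactly} when the span equals the leading eigenspace --- this fails when $\lambda_Q=\lambda_{Q+1}$ --- but since only attainment (not uniqueness) is needed, the conclusion is unaffected.
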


\begin{remark}[Relation to SVD]
	\label{rem:SVD}
		If the scalar product $(\cdot,\cdot)_{\Oo}$ is approximated in the sense of $\ell^2$ on a discrete set of points $\Oot=\{\hat x_1,\ldots,\hat x_M\}\subset \Oo$, i.e.
		\[
			(v,w)_{\Oot} = \frac{|\Oo|}{M}\sum_{i=1}^M v(\hat x_i)\,w(\hat x_i),
		\]
		then we see that $\rC=\rA^T\, \rA$ where $\rA$ is the matrix defined by $\rA_{i,j} =\tfrac{|\Oo|}{\sqrt{N}} f_{y_j}(\hat x_i)$. And thus, the square roots of the eigenvalues \eqref{eq:POD_EigPb} are singular values of $\rA$.
\end{remark}

\begin{remark}[Infinite dimensional version]
In the case where the POD is processed by leaving the parameter $y$
continuous in $\Ot$, the correlation matrix becomes an operator
$C:L^2(\Ot)\to L^2(\Ot)$ with
kernel $C(y_1, y_2) = \left(f_{y_1},f_{y_2}\right)_{\Oo}$ that acts on functions of $y\in\Ot$ as follows
\[
	(C\phi)(y) = \left(C(y,\cdot),\phi\right)_{\Ot}, \quad
        \phi\in L^2(\Ot).
\]
Assuming that $f\in L^2(\Oo\times \Ot)$, by the results obtained in
\cite{MR1158733} (that generalize Mercer's theorem to more general domains) there exists a sequence of positive real eigenvalues (that can be ranked in decreasing order)
and associated orthonormal eigenvectors, which can be used to
construct best $L^2$-approximations~\eqref{eq:Appr}.
\end{remark}

The infinite dimensional version is important to understand the generality of the approach, e.g.\ how the various POD algorithms are linked together. In essence, this boils down to spectral theory of self-adjoint operators, either finite (in the matrix case) or infinite (for integral operator defined with symmetric kernels). Such operators have positive real eigenvalues and the corresponding eigenvectors can be ranked in decreasing order of eigenvalues. The approximation is based on considering the only eigenmodes that corresponds to the largest eigenvalues, they are those that carry the maximum information.

In practice though,  both in the $x$ and the $y$ variables, sample sets $\Oot$ and $\Ott$ are devised. 
Depending on the size of $N$, the solution of the eigenvalue problem
\eqref{eq:POD_EigPb} can be prohibitively expensive. Most of the time though, there is not much hint on the way these training points should be chosen and they are generally quite large sets with $N\gg Q$.

We finally remind that the original goal is to approximate any function $f(x,y)$ for all $x\in\Oo$ and $y\in\Ot$. 
In this regard, the error bound (\ref{eq:POD_err}) only provides an upper error estimate for functions $f_{y}$ with $y\in\Ott$
and no certified error bound for functions $f_{y}$ with $y\in\Ot\backslash\Ott$ can be provided. 

\section{Adaptive Cross Approximation}
 In order to cope with the difficulty of implementation of the POD algorithms, let us present here the {\it Adaptive Cross Approximation}. The approximation leading to~\eqref{eq:Appr} is
\begin{equation}\label{eq:CA}
f(x,y)\approx \mathfrak{I}_Q[f_y](x):=\begin{bmatrix} f(x,y_1)\\\vdots\\ f(x,y_Q)\end{bmatrix}^T
\rM_Q^{-1}
\begin{bmatrix} f(x_1,y)\\\vdots\\ f(x_Q,y)\end{bmatrix}
\end{equation}
with points $x_q$, $y_q$, $q=1,\dots,Q$, chosen such that the matrix
\[
\rM_Q:=\begin{bmatrix} f(x_1,y_1) & \ldots & f(x_1,y_Q)\\
\vdots & &\vdots\\
f(x_Q,y_1) & \ldots & f(x_Q,y_Q)
\end{bmatrix}\in\real^{Q\times Q}
\]
is invertible. Notice that while $P_Q$ used in the construction of the
POD is an orthogonal projector, $\mathfrak{I}_Q:C^0(\Oo)\to
\VQ$ is an interpolation
operator from the space of continuous functions $C^0(\Oo)$ onto the system $\VQ:=\myspan{f_{y_1},\dots,f_{y_Q}}$, i.e.
\[\mathfrak{I}_Q[f_y](x_q)=f(x_q,y)\quad \text{for all }y\text{ and
}q=1,\dots,Q.\]
Due to the symmetry of $x$ and $y$ in \eqref{eq:CA}, we also have
$\mathfrak{I}_Q[f_{y_q}](x)=f(x,y_q)$ for all $x$ and $q=1,\dots,Q$.

\subsection{Historical overview}
Approximations of type \eqref{eq:CA} were first considered by Micchelli and
Pinkus in~\cite{MR510921}. There, it was proved for so-called
totally positive functions~$f$, i.e.\  continuous functions
${f:[0,1]\times[0,1]\to\real}$ with non-negative determinants
\[
\left| \begin{bmatrix} f(\xi_1,\upsilon_1) & \ldots & f(\xi_1,\upsilon_q)\\
\vdots & & \vdots\\
f(\xi_q,\upsilon_1) & \ldots & f(\xi_q,\upsilon_q)
\end{bmatrix}\right|
\]
for all $0\leq \xi_1<\ldots<\xi_q\leq 1$, $0\leq \upsilon_1<\ldots<\upsilon_q\leq 1$,
and $q=1,\dots,Q$, that such approximations are
optimal with respect to the $L^1$-norm, i.e.
\[
\min_{u_q,v_q} \int_0^1\int_0^1 \left| f(x,y)-\sum_{q=1}^Q u_q(x)v_q(y) \right|\ud y\ud x
=\int_0^1\int_0^1 \left| f(x,y)-\mathfrak{I}_Q[f_y](x) \right|\ud y\ud x,
\]
where $\mathfrak{I}_Q$ is defined at implicitly known nodes
$x_1,\dots,x_Q$ and $y_1,\dots,y_Q$; see \cite{MR510921} for an
additional technical assumption.

Instead of $L^1$-estimates, it is usually required to obtain $L^\infty$-estimates.
The obvious estimate 
\[
	\|f_y-\mathfrak{I}_Q[f_y]\|_{L^\infty(\Oo)}\leq(1+\sigma_1[f])\inf_{v\in\VQ}\|f_y-v\|_{L^\infty(\Oo)}
\]
contains the expression
\[
	\sigma_1[f]:= \sup_{x\in \Oo} \Big\| \rM_Q^{-T}\begin{bmatrix} f(x,y_1)\\\vdots\\ f(x,y_Q)\end{bmatrix} \Big\|_{\ell^1}.
\]
Since there is usually no estimate on the previous infimum (note that
$\VQ$ also depends on $\mathcal F =\{f_y\}_{y\in \Ot}$), one tries to relate $f_y-\mathfrak{I}_Q[f_y]$ with the interpolation error
in another system $\WQ=\myspan{w_1,\dots,w_Q}$ of functions (e.g.\
polynomials, spherical harmonics, etc.); cf.~\cite{MR2001j:65022,MR2004a:65177}. Assume that the
determinant of the Vandermonde matrix ${\rm W}_Q:=[w_i(x_j)]_{i,j=1,\dots,Q}$ does not vanish and let
$L:\Oo\to\real^Q$ be the vector consisting of Lagrange
functions $L_i\in \WQ$, i.e.\
$L_i(x_j)=\delta_{ij}$, $i,j=1,\dots,Q$. Then, the interpolation
operator~$\mathfrak{I}_Q'$ defined over $C^0(\Oo)$ with values in~$\WQ$ can be represented as
\[
\mathfrak{I}_Q'[ \varphi](x)=\begin{bmatrix}  \varphi(x_1)\\\vdots\\
\varphi(x_Q)\end{bmatrix}^TL(x),\quad \varphi\in C^0(\Oo),
\]
and we obtain
\begin{align*}
f_y(x)-\mathfrak{I}_Q[f_y](x)&=f_y(x)-\begin{bmatrix} f(x_1,y)\\
  \vdots\\f(x_Q,y)\end{bmatrix}^TL(x)
-\left(\begin{bmatrix} f(x,y_1)\\ \vdots \\ f(x,y_Q)\end{bmatrix}-\rM_Q^TL(x)\right)^T\rM_Q^{-1} \begin{bmatrix} f(x_1,y)\\
  \vdots\\f(x_Q,y)\end{bmatrix}\\
&=f_y(x)-\mathfrak{I}_Q'[f_y](x) -\begin{bmatrix} f_{y_1}(x)-\mathfrak{I}_Q'[f_{y_1}](x)\\ \vdots \\ f_{y_Q}(x) -\mathfrak{I}_Q'[f_{y_Q}](x)\end{bmatrix}^T\rM_Q^{-1} \begin{bmatrix} f(x_1,y)\\
  \vdots\\f(x_Q,y)\end{bmatrix}.
\end{align*}
Hence, for any $y\in\Ot$
\begin{equation}\label{eq:errest}
\|f_y-\mathfrak{I}_Q[f_y]\|_{L^\infty(\Oo)}\leq (1+\sigma_2[f])\max_{z\in\{y,y_1,\dots,y_Q\}}\|f_z-\mathfrak{I}_Q'[f_z]\|_{L^\infty(\Oo)},
\end{equation}
where
\[
\sigma_2[f]:= \sup_{y\in \Ot} \Big\|\rM_Q^{-1}\begin{bmatrix} f(x_1,y)\\\vdots\\ f(x_Q,y)\end{bmatrix} \Big\|_{\ell^1}.
\]

\subsection{Construction of interpolation nodes}\label{ssec:constrip}
The assumption that the determinant of the Vandermonde matrix~${\rm W}_Q$ does not vanish,
can be guaranteed by the choice of $x_1,\dots,x_Q$.
To this end, let $Q$ linearly independent functions
$w_1,\dots,w_Q$ be given as above. As in \cite{Beb10}, we construct linearly independent
functions~$\ell_1,\dots,\ell_Q$ satisfying ${\ell_q(x_p)=0}$, $p<q$, and
$\myspan{\ell_1,\dots,\ell_q}=\WQ$,
$q\leq Q$, in the following way. Let $\ell_1=w_1$ and $x_1\in
\Oo$ be a maximum of $|\ell_1|$. Assume that
$\ell_{Q-1}$ has already been constructed. For the construction of
$\ell_Q$ define $\ell_{Q,0}:=w_Q$ and
\[
  \ell_{Q,q}:=\ell_{Q,q-1}-\ell_{Q,q-1}(x_q)  \frac{\ell_q}
  {\ell_q(x_q)} ,\quad q=1,\dots,Q-1.
\]
Then $\ell_{Q,Q-1}(x_q)=0$, $q<Q$, and
$\myspan{\ell_{Q,0},\dots,\ell_{Q,Q-1}}
=\myspan{\ell_1,\dots,\ell_{Q-1},w_Q}$.
Hence, we set $\ell_Q:=\ell_{Q,Q-1}$ and choose
\begin{equation}\label{eq:choicex}
x_Q:=\argsup{x\in\Oo} |\ell_Q(x)|.
\end{equation}
The previous construction guarantees unisolvency at the nodes $x_q$, $q=1,\dots,Q$.
\begin{lem} It holds that $\det {\rm W}_Q\neq0$.
\end{lem}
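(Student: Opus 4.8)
The plan is to read off a triangular structure from the construction of $\ell_1,\dots,\ell_Q$, conclude that an auxiliary ``Vandermonde'' matrix built from these functions has nonzero determinant, and then transport this to $\det{\rm W}_Q$ via the change of basis linking the $\ell_q$'s to the $w_q$'s. First I would record, by induction on $q$, the two structural properties the construction supplies. The recursion $\ell_{Q,q}=\ell_{Q,q-1}-\ell_{Q,q-1}(x_q)\,\ell_q/\ell_q(x_q)$ only subtracts multiples of $\ell_1,\dots,\ell_{Q-1}\in\myspan{w_1,\dots,w_{Q-1}}$, so $\ell_Q=\ell_{Q,Q-1}$ keeps the coefficient of $w_Q$ equal to $1$; together with the inductive hypothesis $\myspan{\ell_1,\dots,\ell_{Q-1}}=\myspan{w_1,\dots,w_{Q-1}}$ this gives $\myspan{\ell_1,\dots,\ell_q}=\myspan{w_1,\dots,w_q}$ for all $q\le Q$. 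As $w_1,\dots,w_Q$ are linearly independent, each such span has dimension $q$, whence $\ell_1,\dots,\ell_q$ are linearly independent and, in particular, no $\ell_q$ is identically zero. The stated relations $\ell_{Q,Q-1}(x_q)=0$ for $q<Q$ give the second property, $\ell_q(x_p)=0$ whenever $p<q$.

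Next I would check that the diagonal values $\ell_q(x_q)$ are nonzero. Since $x_q=\argsup{x\in\Oo}|\ell_q(x)|$ and $\ell_q\not\equiv0$, we have $|\ell_q(x_q)|=\sup_{x\in\Oo}|\ell_q(x)|>0$, so $\ell_q(x_q)\neq0$. Consequently the matrix $L:=[\ell_i(x_j)]_{i,j=1,\dots,Q}$ is triangular, with vanishing entries for $j<i$ and nonzero diagonal entries $\ell_i(x_i)$; hence $\det L=\prod_{i=1}^Q\ell_i(x_i)\neq0$.

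It remains to pass from $L$ to ${\rm W}_Q$. The span identity of the first step lets me write $\ell_i=\sum_{k=1}^i T_{ik}w_k$ with $T_{ii}\neq0$ (indeed $T_{ii}=1$), so $T=[T_{ik}]$ is lower triangular with nonzero diagonal and thus invertible. Evaluating at the nodes gives $\ell_i(x_j)=\sum_k T_{ik}w_k(x_j)$, i.e.\ $L=T\,{\rm W}_Q$, and therefore $\det{\rm W}_Q=\det L/\det T\neq0$. The only genuinely delicate point is the nonvanishing of the diagonal values $\ell_q(x_q)$: it is what forces $L$ to be invertible, and it relies precisely on the linear independence of the $\ell_q$ (so that $\ell_q\not\equiv0$) established through the span bookkeeping, together with the supremum defining $x_q$ being positive; the remaining matrix manipulations are routine triangular bookkeeping.
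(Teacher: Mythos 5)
Your proof is correct and follows essentially the same route as the paper's: the span identity yields an invertible change-of-basis matrix $\rm T$ with $[\ell_i(x_j)]={\rm T}\,{\rm W}_Q$, and the triangular structure with nonzero diagonal entries $\ell_q(x_q)$ gives $\det[\ell_i(x_j)]=\prod_q\ell_q(x_q)\neq0$. Your explicit justification that $\ell_q(x_q)\neq0$ (via $\ell_q\not\equiv0$ and the maximizing choice of $x_q$) is a detail the paper leaves implicit, but the argument is the same.
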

\begin{proof}
Since $\myspan{\ell_1,\dots,\ell_Q}=\myspan{w_1,\dots,w_Q}$ it
follows that there is a non-singular matrix ${\rm T}\in\real^{Q\times Q}$ such that
\[
\begin{bmatrix}
\ell_1\\\vdots\\\ell_Q
\end{bmatrix}
={\rm T} \begin{bmatrix}
w_1\\\vdots\\ w_Q
\end{bmatrix}.
\]
Hence, ${\rm R}_Q={\rm T}\,{\rm W}_Q$ where ${\rm R}_Q:=[\ell_i(x_j)]_{i,j=1}^Q$ is upper triangular. The assertion
follows from \[\det {\rm R}_Q=\ell_1(x_1)\cdot\ldots\cdot \ell_Q(x_Q)
\neq0.\]
\end{proof}
As an example, we choose $\WQ=\Pi_{Q-1}$ the space of polynomials of
degree at most $Q-1$. Then, it follows from \eqref{eq:errest} that ACA
converges if, e.g.,
$f$ is analytic with respect to~$x$, and the speed of convergence is
determined by the decay of $f$'s derivatives or the elliptical radius
of the ellipse in which $f$ has a holomorphic extension. 
Furthermore, it can be seen that 
\[
	\ell_Q(x)=\prod_{q=1}^{Q-1} (x-x_q).
\]
Hence, the choice~\eqref{eq:choicex} of $x_Q$ is a generalization of a construction that
is due to Leja~\cite{MR0100726}. Leja recursively defines a sequence
of nodes $\{x_1,\dots,x_Q\}$ for polynomial interpolation
in a compact set~$K\subset\mathbb{C}$ as follows. Let $x_1\in K$ be
arbitrary. Once $x_1,\dots,x_{Q-1}$ have been found, choose
$x_Q\in K$ so that
\[
\prod_{q=1}^{Q-1}|x_Q-x_q|=\max_{x\in K}\prod_{q=1}^{Q-1}|x-x_q|.
\]
In~\cite{Tay08} it is proved that Lebesgue constants associated
with Leja points are subexponential for fairly general compact sets
in~$\mathbb{C}$; see also~\cite{MR1039671}.
Hence, analyticity is required in general for the convergence of the
interpolation process.

The expression $\sigma_2[f]$ on the right-hand side of
\eqref{eq:errest} can be controlled by the choice of the points
$y_1,\dots,y_Q\in\Ot$. Due to Laplace's theorem 
\[
\left(\rM_Q^{-1}\begin{bmatrix} f(x_1,y)\\\vdots\\
    f(x_Q,y)\end{bmatrix}\right)_q=\frac{\det \rM_q(y)}{\det \rM_Q},\quad q=1,\dots,Q,
\]
where $\rM_q(y)$ arises from replacing the $q$-th column of $\rM_Q$ by the
vector $[f(x_1,y),\dots,f(x_Q,y)]^T$, we obtain that $\sigma_2[f]\leq
Q$ if $y_1,\dots,y_Q$ are chosen such that
\begin{equation}\label{eq:maxvol}
|\det \rM_Q|\geq |\det \rM_q(y)|,\quad q=1,\dots,Q,\;y\in\Ot.
\end{equation}
In connection with the so-called {\it maximum volume condition} \eqref{eq:maxvol},
we also refer to the error estimates in \cite{JSchn09} which are based on the
technique of \emph{exact annihilators} (see~\cite{MR1830958,MR1019046}) in order to provide similar results as~\eqref{eq:errest}.

\subsection{Incremental construction}
The maximum volume condition~\eqref{eq:maxvol} is
difficult to satisfy by an a priori choice of
$y_1,\dots,y_Q$. Therefore, the following incremental construction  of
approximations \eqref{eq:CA},
which is called {\it adaptive cross approximation}~(ACA)~\cite{MR2001j:65022}, has turned out to be practically more
relevant. Let $r_0(x,y):=f(x,y)$ and define the sequence of remainders as
\begin{equation}\label{eq:resid}
r_q(x,y):=r_{q-1}(x,y)-\frac{r_{q-1}(x,y_q)\,r_{q-1}(x_q,y)}{r_{q-1}(x_q,y_q)},\quad q=1,\dots,Q,
\end{equation}
where $x_q$ and $y_q$ are chosen such that $r_{q-1}(x_q,y_q)\neq0$. Then, the algorithm can be summarized as follows.
\mybox{Bivariate Adaptive Cross Approximation (ACA2)}
{
Set $q:=1$.\\
		While $\err < \tol$
		\begin{itemize}
				\item[\tt a.]	
				Define the remainder $r_{q-1} =  f-\sum_{i=1}^{q-1} c_i$ and choose $(x_q,y_q)\in\Oo\times\Ot$ such that
				\[
					r_{q-1}(x_q,y_q)\neq0.
				\]
				\item[\tt b.]	
				Define the next tensor product by
				\[
					c_q(x,y) = 
					\frac{r_{q-1}(x,y_q) \, r_{q-1}(x_q,y) }{r_{q-1}(x_q,y_q)}.
				\]				
				\item[\tt c.]	
				Define the error level by
				\[
					\err = \|r_{q-1}\|_{L^\infty(\Oo\times\Ot)}
				\]
				and set $q:=q+1$.
		\end{itemize}
}

Since $r_{q-1}(x_q,y_q)$ coincides with the $q$-th diagonal entry of the upper triangular factor of the LU decomposition of
$\rM_Q$, we obtain that $\det \rM_Q\neq0$. In \cite{MR2004a:65177}, it
is shown that 
\begin{equation}
	\label{eq:rel_f_Int}
	f(x,y)=\mathfrak{I}_Q[f_y](x)+r_Q(x,y)
\end{equation}
and
\[
\mathfrak{I}_Q[f_y](x)=\sum_{q=1}^Q
r_{q-1}(x,y_q)\frac{ r_{q-1}(x_q,y)}{r_{q-1}(x_q,y_q)}.
\]
This method is used in
\cite{MR2280533} (see also \cite{Chapman}) under the name
{\it Geddes-Newton series expansion} for the numerical integration of bivariate
functions, where instead of the maximum volume
condition~\eqref{eq:maxvol} $(x_q,y_q)$ is found from maximizing
$|r_{q-1}|$. This choice of $(x_q,y_q)$ is usually referred to as {\it global pivoting}.
Another pivoting strategy is the so-called {\it partial
  pivoting}, i.e.,  $y_q$ is chosen in the $q$-th step such that
\[
|r_{q-1}(x_q,y_q)|\geq |r_{q-1}(x_q,y)|\text{ for all }y\in\Ot
\]
for $x_q\in\Oo$ chosen by \eqref{eq:choicex}.
For the latter condition (and in particular for the stronger global pivoting) the conservative bound $\sigma_2[f]\leq 2^Q-1$ can be guaranteed;
see \cite{MR2001j:65022}. The actual growth of $\sigma_2[f]$ with respect
to~$Q$ is, however, typically significantly weaker.

\subsection{Application to matrices}
Approximations of the form~\eqref{eq:CA} are particularly useful when
they are applied to large-scale matrices $\rA\in\real^{M\times N}$. In this case,
\eqref{eq:CA} becomes
\begin{equation}\label{eq:maca}
\rA\approx \tilde{\rA}:=\rA_{:,\sigma}\,\rA_{\tau,\sigma}^{-1}\,\rA_{\tau,:},
\end{equation}
where $\tau:=\{i_1,\dots,i_Q\}$ and $\sigma:=\{j_1,\dots,j_Q\}$ are
sets of row and column indices, respectively, such that
$\rA_{\tau,\sigma}\in\real^{Q\times Q}$ is invertible.
Here and in the following, we use the notation $\rA_{\tau,:}$ for the
rows $\tau$ and $\rA_{:,\sigma}$ for the columns $\sigma$ of $\rA$.
Notice that the approximation~$\tilde{\rA}$ has rank at most~$Q$ and 
is constructed from few of the original matrix
entries. Such kind of approximations  were investigated by
Eisenstat and Gu~\cite{MR97h:65053} and Tyrtyshnikov~et~al.~\cite{MR99d:15015}
in the context of the maximum volume condition.
Again, the approximation can be constructed incrementally by the
sequence of remainders $\rR^{(0)}:=\rA$ and
\[
\rR^{(q)}:=\rR^{(q-1)}-\frac{\rR^{(q-1)}_{:,j_q} \, \rR^{(q-1)}_{i_q,:}}{\rR^{(q-1)}_{i_q, j_q}},\quad q=1,\dots,Q,
\]
where the index pair $(i_q,j_q)$ is chosen such that $\rR^{(q-1)}_{i_q j_q}\neq0$. The previous
condition guarantees that $\rA_{\tau,\sigma}$ is invertible, and we obtain
\[
\tilde \rA= \sum_{q=1}^Q \frac{\rR^{(q-1)}_{:,j_q}\, \rR^{(q-1)}_{i_q,:}}{\rR^{(q-1)}_{i_q, j_q}}.
\]
If $\rA$ arises from evaluating a smooth function at given points,
then $\rR^{(q)}$ can be estimated using \eqref{eq:errest}.

In order to avoid the computation of each entry of the remainders~$\rR^{(q)}$, it is
important to notice that only the entries in the $i_q$-th row and
the $j_q$-th column of $\rR^{(q-1)}$ are required for the construction
of~$\tilde \rA$. Therefore, the following algorithm computes the column vectors~${\ru_q:=\rR^{(q-1)}_{:,j_q}}$ and row vectors~${\rv_q:=\rR^{(q-1)}_{i_q,:}}$ resulting in
\begin{equation}\label{eq:Auv}
	\tilde \rA= \sum_{q=1}^Q \frac{\ru_q\,\rv_q^T}{(\rv_q)_{j_q}}.
\end{equation}
The iteration stops after~$Q$ steps if the error satisfies
\begin{equation}\label{eq:ACAstop}
\|\rA-\tilde\rA\|_{\ell^2}=\|\rR^{(Q)}\|_{\ell^2}<\varepsilon
\end{equation}
with given accuracy~$\varepsilon>0$. The previous condition
cannot be evaluated with linear complexity. Since the next rank-$1$
term $(\rv_{Q+1})_{j_{Q+1}}^{-1}\ru_{Q+1}\rv_{Q+1}^T$
approximates $\rR^{(Q)}$, we replace \eqref{eq:ACAstop} with the error indicator
\[
\frac{\|\ru_{Q+1}\rv_{Q+1}^T\|_{\ell^2}}{|(\rv_{Q+1})_{j_{Q+1}}|}=\frac{\|\ru_{Q+1}\|_{\ell^2}\|\rv_{Q+1}\|_{\ell^2}}{|(\rv_{Q+1})_{j_{Q+1}}|}<\varepsilon.
\]

\mybox{Adaptive Cross Matrix Approximation}
{
Set $q:=1$.\\
		While $\err < \tol$
		\begin{itemize}
				\item[\tt a.]
Choose $i_q$ such that
	\[
					\rv_q:=\rA^T_{i_q,:}-\sum_{\ell=1}^{q-1}\frac{(\ru_\ell)_{i_q}}{(\rv_\ell)_{j_\ell}} \rv_\ell
				\]
is nonzero and $j_q$ such that $|(\rv_q)_{j_q}|=\max_{j=1,\dots,N} |(\rv_q)_j|$.
				\item[\tt b.]	
				Compute the vector
\[
\ru_q:=\rA_{:,j_q}-\sum_{\ell=1}^{q-1}
\frac{(\rv_\ell)_{j_q}}{(\rv_\ell)_{j_\ell}} \ru_\ell.
\]	
				\item[\tt c.]	
				Compute the error indicator
				\[
					\err = |(\rv_q)_{j_q}|^{-1}\|\ru_q\|_{\ell^2}\|\rv_q\|_{\ell^2}
				\]
				and set $q:=q+1$.
		\end{itemize}
}

\begin{remark}
Notice that almost no condition has been imposed on the row
index~$i_q$. The following three methods are commonly used to choose~$i_q$.
In addition to choosing $i_q$ randomly, $i_q$ can be found as
\[
i_q:=\argmax{i=1,\dots,M} |({\rm u}_{q-1})_i|,
\]
which leads to a cyclic pivoting strategy. If $\rA$ stems from the
evaluation of a function at given nodes, then the construction of
Section~\ref{ssec:constrip} should be used in order to guarantee the
well-posedness of the interpolation operator $\mathfrak{I}_Q'$ and
exploit the error estimate~\eqref{eq:errest}. 

In some cases (see \cite{BOGR04}), it is required to
put more effort in the choice of~$i_q$ to guarantee a well-suited
approximation space~$\myspan{\rA_{i_1,:},\dots,\rA_{i_Q,:}}$;
cf.~\cite{Bebendorf:2008}.
\end{remark}

Instead of the $M\cdot N$ entries of $\rA$, we only have to
compute $Q(M+N)$ entries of $\rA$ for the approximation by $\tilde {\rm A}$. The construction of~\eqref{eq:Auv}
requires $\mathcal{O}(Q^2(M+N))$ arithmetic operations, and $\tilde{\rA}$ can be
stored with $Q(M+N)$ units of storage.
Possible redundancies among the vectors $\ru_q$, $\rv_q$, $q=1,\dots,Q$,
can be removed via orthogonalization.

The origin of this matrix version of ACA is the construction of so-called
hierarchical matrices~\cite{MR2000c:65039,MR2001i:65053,Bebendorf:2008} for the efficient treatment of
integral formulations of elliptic boundary value
problems. Hierarchical matrices allow to treat discretizations of such non-local
operators with logarithmic-linear complexity. To this end, subblocks $\rA_{t,s}$ from a suitable partition of
large-scale matrices $\rA$ are approximated by low-rank matrices.

A form that is slightly different from~\eqref{eq:maca} and which looks
more complicated at first glance is
\[
\rA_{t,s}\approx \hat{\rA}_{t,s}:=\rA_{:,\sigma_t}\,\rA_{\tau_t,\sigma_t}^{-1}\,\rA_{\tau_t,\sigma_s}\,\rA_{\tau_s,\sigma_s}^{-1}\,\rA_{\tau_s,:}
\]
with suitable index sets $\tau_t$, $\sigma_t$, $\tau_s$, and $\sigma_s$
depending on the respective index $t$ or $s$ only. Notice that in
contrast to $\tilde \rA$, $\hat\rA$ 
does not interpolate $\rA$ on the ``cross'' but rather at
single points specified by the indices~$\tau_t,\sigma_s$, i.e.\ $\hat \rA_{\tau_t,\sigma_s}=\rA_{\tau_t,\sigma_s}$. 
The advantage of this
approach is the fact that the large parts
$\rA_{:\sigma_t}\rA_{\tau_t,\sigma_t}^{-1}$ and
$\rA_{\tau_s,\sigma_s}^{-1}\rA_{\tau_s,:}$ depend only on either one of the two index
sets $t$ or $s$, while only the small matrix $\rA_{\tau_t,\sigma_s}$
depends on both. This allows to further reduce the complexity of
hierarchical matrix approximations by constructing so-called nested
bases approximations~\cite{MBRV11},
which are mandatory to efficiently treat high-frequency Helmholtz
problems; see~\cite{BKV12}.

\subsection{Relation with Gaussian elimination}
Without loss of generality, we may assume for the moment that 
$i_q=j_q=q$, $q=1,\dots,Q$.
Otherwise, interchange the rows and columns of the original matrix~$R^{(0)}$.
Then 
\[
	\rR^{(q)} = \left(\rI- \frac{\rR^{(q-1)} \re_q \re_q^T}{\re_q^T\rR^{(q-1)}\re_q}\right)\rR^{(q-1)} = \rL^{(q)} \rR^{(q-1)},
\]
where $\rL^{(q)}\in\real^{m\times n}$ is the matrix
\[
\rL^{(q)} =\begin{bmatrix}
1 &        &   &   & & & \\
  & \ddots &   &   & & & \\
  &        & 1 &   & & & \\ \\
  &        &   & 0 & & & \\ \\
  &        &   & -\frac{\re_{q+1}^T \rR^{(q-1)} \re_q}{\re_q^T \rR^{(q-1)} \re_q} & 1 & & \\
  &        &   & \vdots &   & \ddots & \\
  &        &   & -\frac{\re_M^T \rR^{(q-1)} \re_q}{\re_q^T \rR^{(q-1)} \re_q} &   & & 1
\end{bmatrix},
\]
which differs from a Gaussian matrix only in the position $(q,q)$; cf.~\cite{MR2001j:65022}.
This relation was exploited in~\cite{HH11} for the convergence analysis of 
ACA in the case of positive definite matrices~$\rm A$.

Furthermore, it is an interesting observation that ACA reduces the
rank of the remainder in each step, i.e.\
$\textnormal{rank}\,{\rm R}^{(q)}=\textnormal{rank}\, {\rm R}^{(q-1)}-1$.  This was first discovered by Wedderburn in \cite[p.\
69]{MR0168568}; see also~\cite{ChFuGo1995,MR2001j:65022}.
Hence, ACA may be regarded as a rank revealing LU factorization~\cite{MR85i:65037,MR93i:15016}.
As we know, it is possible that the elements 
grow in the LU decomposition algorithm; cf.~\cite{MR97g:65006}.
Thus the exponential bound $2^Q$ on $\sigma_2[f]$ is not a result of 
overestimation.

\subsection{Generalizations of ACA}
The adaptive cross approximation can easily be generalized to a linear
functional setting. Instead the evaluation of the remainders at the
chosen points $x_q$, $y_q$, $q=1,\dots,Q$, one considers the 
recursive construction
\[
r_q(x,y):=r_{q-1}(x,y)-\frac{\langle
  r_{q-1}(x,\cdot),\psi_q\rangle\,\langle
  \varphi_q,r_{q-1}(\cdot,y)\rangle}
{\langle \varphi_q,r_{q-1},\psi_q\rangle},\quad q=1,\dots,Q.
\]
Here, $\varphi_q$ and $\psi_q$ denote given linear functionals acting on~$x$ and~$y$, respectively. 
It is easy to show (see~\cite{MBCK11}) that
\begin{equation}\label{eq:zeros}
\langle
\varphi_i,r_q(\cdot,y)\rangle=0
=\langle r_q(x,\cdot),\psi_i\rangle\quad
\text{for all }i\leq q,\, x\in\Oo\text{ and }y\in\Ot.
\end{equation}
Hence, $r_q$ vanishes for an increasing number of functionals and 
\[
\mathfrak{I}''_Q[f_y](x):=\sum_{q=1}^Q \langle
r_{q-1}(x,\cdot),\psi_q\rangle\frac{\langle \varphi_q,r_{q-1}(\cdot,y)\rangle}{\langle
\varphi_q,r_{q-1},\psi_q\rangle}
\]
gradually interpolates $f_y$ (in the sense of functionals). The adaptive cross
approximation~\eqref{eq:resid} is obtained from choosing the Dirac functionals
$\varphi_q:=\delta_{x_q}$ and $\psi_q:=\delta_{y_q}$.

The benefits of the separation of variables resulting
form~\eqref{eq:CA} are even more important for
multivariate functions~$f$. We present two ways to generalize
\eqref{eq:resid} to functions depending on $d$~variables. An obvious idea is to group the set of
variables into two parts each containing $d/2$~variables; see \cite{MBCK11} for a method that uses the
covariance of~$f$ to construct this separation. Each of the two parts can be
treated as a single new variable. Then, the application of \eqref{eq:resid} results
in a sequence of less-dimensional functions which inherit the
smoothness of~$f$. Hence, \eqref{eq:resid} can be applied again until
only univariate functions are left. Due the nestedness of the construction, the
constructed approximation cannot be regarded as an
interpolation. Error estimates for this approximation were derived in
\cite{Beb10} for $d=3,4$. The application to tensors of order $d>2$
was presented in~\cite{1572446,TT-cross,BGK10}. 

A more sophisticated way to generalize ACA to multivariate functions
is presented in \cite{MBAK11}. For the case $d=3$, the sequence of
remainders is constructed as
\[
r_q(x,y,z):=r_{q-1}(x,y,z)-\frac{r_{q-1}(x,y,z_q)\,r_{q-1}(x,y_q,z)\,r_{q-1}(x_q,y,z)\,r_{q-1}(x_q,y_q,z_q)}
{r_{q-1}(x,y_q,z_q)\,r_{q-1}(x_q,y,z_q)\,r_{q-1}(x_q,y_q,z)}
\]
instead of \eqref{eq:resid}. Notice that this kind of approximation
requires that $x_q,y_q,z_q$ can be found such that the denominator
$r_{q-1}(x,y_q,z_q)\,r_{q-1}(x_q,y,z_q)\,r_{q-1}(x_q,y_q,z)\neq0$.
On the other hand, the advantage of this generalization is that it is equi-directional
in contrast to the aforementioned idea, i.e., none of the variables is
preferred to the others. Hence, similar to~\eqref{eq:zeros} we obtain
for all $x,y,z$
\[
r_q(x,y,z_i)=r_q(x,y_i,z)=r_q(x_i,y,z)=0,\quad i\leq q.
\]

\section{Empirical Interpolation Method}
\subsection{Historical overview} 

The Empirical Interpolation Method (EIM) \cite{Barrault:2004wb} originates from reduced order modeling and its application to the resolution of parameter dependent partial differential equations. We are thus in the context where the set of solutions $u(\cdot,y)$ to the PDE generates a manifold, parametrized by $y$ (the parameter is generally called $\mu$ in these applications) that possesses a small Kolmogorov $n$-width. 
In the construction stage of the reduced basis method, the reduced basis is constructed from a greedy approach where each new basis function, that is a solution to the PDE associated to an optimally chosen parameter, is incorporated recursively.
The selection criteria of the parameter is based on maximal  (a posteriori) error estimates over the parameter space.
This construction stage can be expensive: indeed it requires an initial accurate classical discretization method of finite element, spectral or finite volume type and every solution associated to a parameter that is optimally selected, needs to be approximated during this stage by the classical method. 
Once the preliminary stage is performed off-line, all the approximations of solutions corresponding to a new parameter are performed as a linear combination of the (few) basis functions
constructed during the first phase. This second on-line stage is very cheap. This is due to two facts. The first one is related to the fact that  the greedy approach is proven to be quite optimal \cite{buffa2012priori, binev2011convergence, devore2012greedy}, for exponential or polynomial decay of the Kolmogorov $n$-width, the greedy method provides a basis set that has the same feature.

The second fact is related to the approximation process. A Galerkin approximation in this reduced space indeed provides very good approximations, and if $Q$ modes are used, a linear PDE can be simulated by inverting $Q\times Q$ matrices only, i.e.\ much smaller complexity than the classical approaches.

In order that the same remains true for nonlinear PDE's, a strategy, similar to the pseudo-spectral approximation for high-order Fourier or polynomial approximations has been sought. This involves the use of an interpolation operator. In order to be coherent, an approximation $u_Q(\cdot,y) = \sum_{i=1}^Q \alpha_i(y)\, u(\cdot,y_i) $ being given (where the $y_i$ are the parameters that define the reduced basis snapshots) we want to approximate ${\mathcal G}(u_Q(\cdot,y))$ (${\mathcal G}$ being a nonlinear functional)  as a linear combination 
$${\mathcal G}(u_Q(\cdot,y)) \approx \sum_{i=1}^Q \beta_i(y) {\mathcal G}(u(\cdot,y_i)).$$
The derivation of the set $\{\beta_i\}_i$ from $\{\alpha_i\}_i$ needs to be very fast, it is defined by interpolation through the Empirical Interpolation Method defined in the following section. This has been extensively used for different types of equations in \cite{grepl2007efficient} and has led to the definition of general interpolation techniques and rapid derivation of the associated points.

The approach having a broader scope than only the use in reduced basis approximation, a dedicated analysis of the approximation properties for sets with small-Kolmogorov $n$-width has been presented in \cite{Maday:2009tg}. This approach for nonlinear problems has actually also been used for problems where the dependency in the parameter is involved (the so called ``non-affine problems'') and has boosted the domain of application of reduced order approximations.

\subsection{Motivation}
As said above and in the introduction, we are in a situation where the set $\mathcal F =\{f(\cdot,y)\}_{y\in \Ot}$ denotes a family of parametrized functions with small Kolmogorov $n$-width. 
We therefore do not identify $\Oo$ with $\Ot$. 
In addition, for a given parameter $y$, $f(\cdot,y)$ is supposed to be accessible at all values in $\Oo$.

The EIM is designed to find approximations to members of $\mathcal F$ through an interpolation operator~$\Inter_q$ that interpolates the function $f_y = f(\cdot,y)$ 
at some particular points in $\Oo$.
That is, given an interpolatory system defined by a set of basis functions $\{h_1,\ldots,h_q\}$ (linear combination of particular ``snapshots'' $f_{y_1},\ldots,f_{y_q}$) and interpolation points $\{x_1,\ldots,x_q\}$, the interpolant~$\Inter_q[f_y]$ of $f_y$ with $y\in\Ot$ written as
\begin{equation}
	\label{eq:EIM_appr}
	\Inter_q [f_{y}](x) = \sum_{j=1}^q g_j(y) h_j(x),\quad  x\in\Oo,
\end{equation}
is defined by 
\begin{equation}
	\label{eq:EIM_inter}
	\Inter_q [f_{y}](x_i) =  f_{y}(x_i),\quad  i=1,\ldots,q.
\end{equation}
Thus, \eqref{eq:EIM_inter} is equivalent to the following linear system
\begin{equation}
	\label{eq:EIM_interp}
	\sum_{j=1}^q g_j(y) h_j(x_i) = f_y(x_i),\quad i=1,\ldots,q.
\end{equation}
One of the problems is to ensure that the system above is uniquely solvable, i.e.\ that the matrix~$(h_j(x_i))_{i,j}$ is invertible, which will be considered in the design of the interpolation scheme.
\subsection{Algorithm}

The construction of the basis functions and interpolation points is based on a greedy algorithm.
Note that the EIM is defined with respect to a given norm on $\Oo$ and we consider here $L^p(\Oo)$-norms for $1\le p\le \infty$. The algorithm is given as follows.

\mybox{
Empirical Interpolation Method
}
{
		Set $q=1$.		
		Do while $\err < \tol$:
		\begin{itemize}
				\item[\tt a.]	
				Pick the sample point
				\begin{equation}
					\label{eq:EIM_argmax1}
					y_{q} = \argsup{y\in\Ot} \|f_{y} - \Inter_{q-1} [f_{y}] \|_{L^p(\Oo)}, 
				\end{equation}
				and the corresponding interpolation point
				\begin{equation}
					\label{eq:EIM_argmax2}
					 x_{q} = \argsup{x\in\Oo} |f_{y_q}(x) - \Inter_{q-1} [f_{y_q}](x)|.
				\end{equation}
				\item[\tt b.]	
				Define the next basis function as 
				\begin{equation}
					\label{eq:hq}
					h_q = \frac{f_{y_q} - \Inter_{q-1} [f_{y_q}]}{f_{y_q}(x_q) - \Inter_{q-1} [f_{y_q}](x_q)}.
				\end{equation}
				\item[\tt c.]	
				Define the error level by
				\[
					\err =\| \err_p \|_{L^\infty(\Ot)} \quad\mbox{with}\quad  \err_p(y) = \| f_y - \Inter_{q-1} [f_y] \|_{L^p(\Oo)},
				\]
				and set $q:=q+1$.
		\end{itemize}
}
\begin{remark}
Note that whenever $\mbox{dim}(\myspan{\mathcal F})=q^\star$, the algorithm finishes for $q=q^\star$.
\end{remark}

As long as $q\le q^\star$, note that the basis functions $\{h_1,\ldots,h_q\}$ and the snapshots $\{f_{y_1},\ldots,f_{y_q}\}$ span the same space, i.e.,
\[
	\Vq=\myspan{h_1,\ldots,h_q} = \myspan{f_{y_1},\ldots,f_{y_q}}.
\]
The former are preferred to the latter due the following properties
\begin{equation}
	\label{eq:EIM_LT}
	h_i(x_i) = 1,\quad\forall i=1,\ldots,q
	\quad\quad\mbox{and}\quad	h_j(x_i) = 0, \quad 1\le i < j \le q. 
\end{equation}

\begin{remark}
It is easy to show that the interpolation operator $\Inter_q$ is the identity if restricted to the space $\Vq$, i.e.,
\[
	\Inter_q[f_{y_i}](x) = f_{y_i}(x),\quad i=1,\ldots,q,\quad x\in \Oo,\\
\]
\end{remark}
\begin{remark}
The construction of the interpolating functions and the associated interpolation points follows a  greedy approach: we add the function in $\mathcal F$ that is the worse approximated by the current interpolation operator and the interpolation point is where the error is the largest. The construction is thus recursive which, in turn, means that it is of low computational cost.
\end{remark}

\begin{remark}
As explained in \cite{Barrault:2004wb}, the algorithm can be reduced to the selection of the interpolation points only, in the case where the family of interpolating functions $\{f_{y_1},\ldots,f_{y_q},\ldots \}$ is preexisting. This can be the case for instance if a POD strategy has been used previously or when one considers a set that has a canonical basis and ordering (like the set of polynomials).
 \end{remark}

Note that solving the interpolation system \eqref{eq:EIM_interp} can be written as a linear system ${\rm B}\,{\rm g}_y ={\rm f}_y$  with $q$ unknowns and equations where
\[
	{\rm B}_{i,j} = h_j(x_i), \quad 
	({\rm f}_y)_i = f_y(x_i),
	\quad  i,j=1,\ldots,q,
\]
such that the interpolant is defined by
\[
	\Inter_q [f_{y}](x) = \sum_{j=1}^q ({\rm g}_y)_j h_j(x),\quad  x\in\Oo.
\]
This construction of the basis functions and interpolation points satisfies the following theoretical properties (see \cite{Barrault:2004wb}):
\begin{itemize}
\item The basis functions $\{h_1,\ldots,h_q\}$ consist of linearly independent functions;
\item The interpolation matrix ${\rm B}_{i,j}$ is lower triangular with unity diagonal and hence invertible by~\eqref{eq:EIM_LT};
\item The empirical interpolation procedure is well-posed in $L^p(\Oo)$, as long as $q\le q^\star$.
\end{itemize}
If the $L^\infty(\Oo)$-norm ($p=\infty$) is considered, the error
analysis of the interpolation procedure classically involves the
Lebesgue constant $\Lambda_q = \sup_{x\in\Oo} \sum_{i=1}^q |L_i(x)|$
where $L_i\in\Vq  $ are the Lagrange functions satisfying $L_i(x_j)=\delta_{ij}$. The following bound holds \cite{Barrault:2004wb}
\[
	\| f_y - \Inter_q[f_y] \|_{L^\infty(\Oo)} 
	\le (1+\Lambda_q) \inf_{v_q\in\Vq} \| f_y - v_q] \|_{L^\infty(\Oo)}.
\]
An (in practise very pessimistic) upper bound (c.f.\ \cite{Maday:2009tg}) of the Lebesque constant is given by
\[
	\Lambda_q\le 2^q-1,
\]
which in turn results in the following estimate. 
Assume that $\mathcal F\subset\mathcal X\subset L^\infty(\Oo)$ and that there exists a sequence of finite dimensional spaces
\[
	\mathbb Z_1\subset \mathbb Z_2 \subset \ldots,
	\quad \mbox{dim}(\mathbb Z_q) = q,
	\quad \mbox{and} \quad 
	\mathbb Z_q\subset \mathcal F,
\]
such that there exists $c>0$ and $\alpha >\log(4)$ with
\[
	\inf_{v_q\in\mathbb Z_q} \|f_y-v_q \|_{\mathcal X} \le ce^{-\alpha q},
	\quad y\in\Ot,
\]
then
\[
	\| f_y-\Inter_q[f_y]\|_{L^\infty(\Oo)}
	\le ce^{-(\alpha-\log(4)) \, q}.
\]

\begin{remark}
The worst-case situation where the Lebesgue constant scales indeed  like $\Lambda_q\le 2^q-1$ is rather artificial and in all implementations we have done so far involving functions belonging to some reasonable set with small Kolmogorov $n$-width, the growth of the Lebesgue constant is much more reasonable and in most of the times a linear growth is observed. 
Note that, the points that are generated by the EIM using polynomial basis functions (in increasing order of degree) on $[-1,1]$ are exactly the Leja points as indicated in the frame of the EIM by A. Chkifa\footnote{personal  communication} and the discussion in Section~\ref{ssec:constrip} in the case of ACA.
On the other hand, if one considers the Leja points on a unit circle and then project them onto the interval $[-1,1]$ a linear growth is shown in~\cite{chkifa2012lebesgue}.
 \end{remark}

\subsection{Practical implementation}
In the practical implementation of the EIM one encounters the following problem.
Finding the supremum respectively the arg sup in 	\eqref{eq:EIM_argmax1} and \eqref{eq:EIM_argmax2} is not feasible if any kind of approximation is effected. 
The least difficult way, but not the only one, is to consider representative point-sets $\Oot=\{\hat x_1,\hat x_2,\ldots,\hat x_M\}$ of $\Oo$ and $\Ott=\{\hat y_1,\hat y_2,\ldots,\hat y_N\}$ of $\Ot$.
Then, the EIM is written as:
\mybox{
Empirical Interpolation Method (possible implementation of EIM)
}
{
		Set $q=1$.		
		Do while $\err < \tol$:
		\begin{itemize}
				\item[\tt a.]	
				Pick the sample point
				\begin{equation}
					\label{eq:EIM_argmax1_pract}
					y_{q} = \argmax{y\in\Ott} \|f_{y} - \Inter_{q-1} [f_{y}] \|_{L^p(\Oo)}, 
				\end{equation}
				and the corresponding interpolation point
				\[
					 x_{q} = \argmax{x\in\Oot} |f_{y_q}(x) - \Inter_{q-1} [f_{y_q}](x)|.
				\]
				\item[\tt b.]	
				Define the next basis function as 
				\[
				h_q = \frac{f_{y_q} - \Inter_{q-1} [f_{y_q}]}{f_{y_q}(x_q) - \Inter_{q-1} [f_{y_q}](x_q)}.
				\]
				\item[\tt c.]	
				Define the error level by
				\[
					\err =\| \err_p \|_{L^\infty(\Ot)} \quad\mbox{with}\quad  \err_p(y) = \| f_y - \Inter_{q-1} [f_y] \|_{L^p(\Oo)}
				\]
				and set $q:=q+1$.
		\end{itemize}
}

This possible implementation of the EIM is sometimes referred to as the discrete empirical interpolation method (DEIM) \cite{chaturantabut2009discrete}.

\begin{remark}
	Different strategies have been reported in \cite{Maday:2012uk,Haasdonk:2011dj} to successively enrich the training set~$\Ott$.
	The main idea is to start with a small number of training points and enrich the set during the iterations of the algorithm and obtain a very fine discretization only towards the end of the algorithm.
	One can also think of enriching the training set $\Oot$ simultaneously.
\end{remark}

\begin{remark}
Using representative pointsets $\Oot$ and $\Ott$ is only one way to discretize the problem. Alternatively, one can think of using optimization methods to find the maximum over $\Oo$ and $\Ot$. Such a strategy has been reported in \cite{BuiThanh2,BuiThanh1} in the context of the reduced basis method, which, as well as the EIM, is based on a greedy algorithm.
\end{remark}

\subsection{Practical implementation using the matrix representation of the function}
One can define an implementation of the EIM in a completely discrete setting using the representative matrix of $f$ defined by $\rM_{i,j}=f(x_i,y_j)$ for $1\le i\le M$ and $1\le j\le N$. 
For the sake of short notation we recall the notation $\rM_{:,j}$ used for the $j$-th column of $\rM$.

Assume that we are given a set of basis vectors $\{\rh_1,\ldots,\rh_q\}$ and interpolation indices $i_1,\ldots,i_q$, the discrete interpolation operator $\MInter_{q}:\real^N\to\real^N$ of column vectors is given in the span of the basis vectors $\{\rh_j\}_{j=1}^q$, i.e.\ by $\MInter_{q} [\rr] = \sum_{j=1}^q g_j(\rr) \rh_j $ for some scalars $g_j(\rr)$, such that
\[
	(\MInter_{q} [\rr])_{i_k} = \sum_{j=1}^q g_j(\rr)\, (\rh_j)_{i_k} = \rr_{i_k},\quad \rr\in\real^N,\quad k=1,\ldots,q .
\]
Using this notation, we then present the discrete version of the EIM:
\mybox{
Empirical Interpolation Method (implementation based on representative matrix $\rM$ of $f$)
}
{
		Set $q=1$.		
		Do while $\err < \tol$
		\begin{itemize}
				\item[\tt a.]	
				Pick the sample index
				\[
					j_{q} = \argmax{j=1,\ldots,M} \|\rM_{:,j} - \MInter_{q-1} [\rM_{:,j}]\|_{\ell^p},
				\]
				and the corresponding interpolation index
				\[
					 i_{q} = \argmax{i=1,\ldots,N}|\rM_{i,j_q} - (\MInter_{q-1} [\rM_{:,j_q}])_{i} |.
				\]
				\item[\tt b.]	
				Define the next approximation column by
				\[
					\rh_q = \frac{\rM_{:,j_q} - \MInter_{q-1} [\rM_{:,j_q}]}{\rM_{i_q,j_q} - (\MInter_{q-1} [\rM_{:,j_q}])_{i_q}}.
				\]
				\item[\tt c.]	
				Define the error level by
				\[
					\err = \max_{j=1,\ldots,M} \|\rM_{:,j} - \MInter_{q-1} [\rM_{:,j}] \|_{\ell^p}
				\]
				and set $q:=q+1$.
		\end{itemize}
}
This procedure allows to define an approximation of any coefficient of the matrix $\rM$.
In some cases however, one would like to obtain an approximation of $f(x,y)$ for {\it any} $(x,y)\in\Oo\times\Ot$.
After running the implementation, one can still construct the continuous interpolant $\Inter_Q[f](x,y)$ for {any} $(x,y)\in\Oo\times\Ot$.
Indeed, the interpolation points $x_1,\ldots,x_Q$ are provided by $x_q = \hat x_{i_q}$.
The construction of the (continuous) basis functions $h_q$ is based on mimicking part {\tt b} of the discrete algorithm but in a continuous context.
Therefore, during the discrete version one saves the following data 
\begin{align*}
	s_{q,j} &= g_j(\rM_{:,j_q}),&&
	\mbox{from}\quad 
	\MInter_{q-1} [\rM_{:,j_q}]=\sum_{j=1}^{q-1} g_j(\rM_{:,j_q})\, \rh_j,\\
	s_{q,q} &= \rM_{i_q,j_q} - (\MInter_{q-1} [\rM_{:,j_q}])_{i_q}.&&
\end{align*}
Then, the continuous basis functions can be recovered by the following recursive formula
\[
	h_q = \frac{ f_{y_q} - \sum_{j=1}^{q-1} s_{q,j} \, h_j }{s_{q,q}}
\]
using the notation $y_q = \hat y_{i_q}$.

\subsection{Generalizations of the EIM}
In the following, we present some generalizations of the core concept behind the EIM.

\subsubsection{Generalized Empirical Interpolation Method (gEIM)} 
We have seen that the EIM-interpolation operator $\Inter_q[f_y]$, $y\in\Ot$, interpolates the function $f_y$ at some empirically constructed points $x_1,\ldots,x_q$.
The EIM can be generalized in the following sense as proposed in \cite{MagenesGEIM}. Let $\Sigma$ be a dictionary of  linear continuous forms (say for the $L^2(\Oo)$-norm) acting on functions $f_y$, $y\in\Ot$. 
Then, the gEIM consists in providing a set of basis functions $h_1,\ldots,h_q$, such that $\mathbb V_q = \myspan{h_1,\ldots,h_q} = \myspan{f_{y_1},\ldots,f_{y_q}}$ for some empirically chosen $\{y_1,\ldots,y_q\}\subset\Ot$, and a set of linear forms, or moments, $\{\sigma_1,\ldots,\sigma_q\}\subset \Sigma$. 
The generalized interpolant then takes the form
\[
	\gInter_q[f_y] = \sum_{j=1}^q  g_j(y) h_j(x), \quad x\in\Oo, \quad y\in\Ot,
\]
and is defined in the following way
\[
	\sigma_i\left(\gInter_q[f_y]\right) = \sigma_i(f_y),\quad
	 i=1,\ldots,q,
\]
which will define the coefficients $g_j(y)$ for each $y\in\Ot$.
We note that if the linear forms are Dirac functionals $\delta_x$ with $x\in\Oo$, then the gEIM reduces to the plain EIM.
The algorithm is given by
\mybox{
Generalized Empirical Interpolation Method (gEIM)
}
{
		Set $q=1$.		
		Do while $\err < \tol$:
		\begin{itemize}
				\item[\tt a.]	
				Pick the sample point
				\[
					y_{q} = \argsup{y\in\Ot} \|f_{y} - \gInter_{q-1} [f_{y}] \|_{L^p(\Oo)}, 
				\]
				and the corresponding interpolation moment
				\[
					 \sigma_{q} = \argsup{\sigma\in\Sigma} |\sigma(f_{y_q} - \gInter_{q-1} [f_{y_q}])|.
				\]
				\item[\tt b.]	
				Define the next basis function as 
				\[
					h_q = \frac{f_{y_q} - \gInter_{q-1} [f_{y_q}]}{\sigma_q(f_{y_q} - \gInter_{q-1} [f_{y_q}])}.
				\]
				\item[\tt c.]	
				Define the error level by
				\[
					\err =\| \err_p \|_{L^\infty(\Ot)} \quad\mbox{with}\quad  \err_p(y) = \| f_y - \Inter_{q-1} [f_y] \|_{L^p(\Oo)}
				\]
				and set $q:=q+1$.
		\end{itemize}
}

This constructive algorithm satisfies the following theoretical properties (see \cite{MagenesGEIM}):
\begin{itemize}
\item The set $\{h_1,\ldots,h_q\}$ consists of linearly independent functions;
\item The generalized interpolation matrix $({\rm B})_{ij}=\sigma_i(h_j)$ is lower triangular with unity diagonal (hence invertible) with other entries $s\in [-1, 1]$;
\item The generalized empirical interpolation procedure is well-posed in $L^2(\Oo)$.
\end{itemize}
In order to quantify the error of the interpolation procedure, like in the standard interpolation procedure, we introduce the Lebesgue constant in the $L^2$-norm:	
\[
	\Lambda_q	=	\sup_{y\in\Ot} \frac{\|\gInter_q[f_y]\|_{L^2(\Oo)}}{\|f_y\|_{L^2(\Oo)}},
\]
i.e.\ the $L^2$-operator norm of $\gInter_q$. Thus, the interpolation error satisfies:
\[
	\|f_y - \gInter_q [f_y]\|_{L^2(\Omega)} 
	\le (1+\Lambda_q)	\inf_{v_q\in \mathbb V_q} \|f_y - v_q\|_{L^2(\Omega)}.
\]
Again, a (very pessimistic) upper-bound for $\Lambda_q$ is: 
\[
	\Lambda_q \le 2^{q-1} \max_{i = 1,\ldots,q} \|h_i\|_{L^2(\Omega)},\quad 
\]
indeed, the Lebesgue constant is, in many cases, uniformly bounded in this generalized case. The following result proves that the greedy construction is quite optimal
\cite{madaypriori}

\begin{enumerate}
\item[\tt 1.] Assume that the Kolmogorov $n$-width of $\mathcal F$ in $L^2(\Oo)$ is upper bounded by $C_0 n^{-\alpha}$ for any $n\geq 1$, then the interpolation error of the gEIM greedy selection process 
satisfies for any $f \in \mathcal F$ the inequality $\Vert f(\cdot,y) -J_Q[f(\cdot,y)] \Vert _{L^2(\Oo)} \leq C_0 (1+\Lambda_Q)^3 Q^{-\alpha}$.
\item[\tt 2.] Assume that the Kolmogorov $n$-width of $\mathcal F$ in $L^2(\Oo)$ is upper bounded by $C_0 e^{-c_1 n^{\alpha}}$ for any $n\geq 1$, then the interpolation error of the gEIM greedy selection process 
satisfies for any $f \in \mathcal F$ the inequality $\Vert f(\cdot,y) -J_Q[f(\cdot,y)] \Vert _{L^2(\Oo)} \leq  C_0 (1+\Lambda_Q)^3  e^{-c_2 Q^{\alpha}}$ for a positive constant $c_2$ slightly smaller than $c_1$.
\end{enumerate}

\subsubsection{$hp$-EIM}
If the Kolmogov $n$-width is only decaying slowly with respect to $n$ and the resulting number of basis functions and associated integration points is larger than desired, a remedy consists of partitioning the space $\Ot$ into different elements $\Otp{1},\ldots,\Otp{P}$ on which a separate interpolation operator $\Inter_{q_p}:\{f_y\}_{y\in \Oop{p}}\to \mathbb V_{q_p}$ with $p=1,\ldots,P$ is constructed. That is, for each element $\Otp{p}$ a standard EIM as described above is performed. 
The choice of creating the partition is subject to some freedom and different approaches have been presented in \cite{Eftang:2011vn,Fares:2011p6716}.

A somewhat different approach is presented in \cite{Maday:2012uk}, although in the framework of a projection method,  where the idea of a strict partition of the space $\Ot$ is abandoned. Instead, given a set of sample points $y_1,\ldots,y_K$ for which the basis functions $f(\cdot,y_1),\ldots,f(\cdot,y_K)$ are known (or have been computed) 
a local approximation space for any $y\in\Ot$ is constructed by considering the $N$ basis functions whose parameter values are closest to $y$. 
In addition, the distance function, measuring the distance between two points in $\Ot$, can be empirically built in order to represent local anisotropies in the parameter space $\Ot$.
Further, the distance function can also be used to define the training set $\Ott$ which can be uniformly sampled with respect to the problem dependent distance function.

\subsubsection{Curse of high-dimensionality}
Several approaches have been presented in cases where $\Ot$ is high-dimensional ($\mbox{dim}(\Ot)\approx 10$).
In such cases, finding the maximizer in \eqref{eq:EIM_argmax1_pract} becomes a challenge. Since the discrete set $\Ott$ should be representative of $\Ot$, we require that $\Ott$ consists of a very large number of training points. Finding the maximum over this huge set is therefore prohibitive expensive as a result of the curse of dimensionality.

In \cite{brown_sc_2011_15}, the authors propose a computational approach that randomly samples the space $\Ot$ with a feasible number of training points, that is however changing over the iterations. Therefore, counting all tested training points over all iterations is still a very large number, at each iteration though finding the maximum is a feasible task.

In \cite{brown_sc_2011_31}, the authors use, in the framework of the reduced basis method, an ANOVA expansion based on sparse grid quadrature in order to identify the sensitivity of each dimension in $\Ot$. Then, once unimportant dimensions in $\Ot$ are identified, the values of the unimportant dimensions are fixed to some reference value and the variation of $y$ in $\Ot$ is then restricted to the important dimensions. Finally, a greedy-based algorithm is used to construct a low-order approximation.

\section{Comparison of ACA vs. EIM}

In the previous sections, we have given independent presentations of the basics of the ACA and the EIM type methods. As was explained, the backgrounds and the applications are different. In addition, we have also presented the results of the convergence analysis of these approximations yielding another fundamental difference between the two approaches. The frame for the convergence of the ACA is a comparison to any other interpolating system, such as the polynomial approximation and the existence of derivatives for the family of functions $f_y$, $y\in\Ot$ is then the reason for convergence.
The convergence of the EIM is compared with respect to the $n$-width expressed by the Kolmogorov small dimension.

Nevertheless, despite there differences in origins, it is clear that some link exist between these two constructive approximation methods. We show now the relation between the ACA and the EIM in a particular case.
\begin{thm}
The Bivariate Adaptive Cross Approximation with global pivoting is equivalent to the Empirical Interpolation Method using the $L^\infty(\Oo)$-norm.
\end{thm}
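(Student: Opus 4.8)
The plan is to show that the two algorithms generate the *same* sequence of pivots $(x_q,y_q)$ and hence produce identical approximations, by matching their recursive constructions step by step. The key observation tying them together is the identity \eqref{eq:rel_f_Int}, namely that the ACA remainder equals the interpolation error: $r_{q-1}(x,y)=f(x,y)-\mathfrak{I}_{q-1}[f_y](x)$. Since the EIM interpolant $\Inter_{q-1}[f_y]$ interpolates on the same span $\myspan{f_{y_1},\dots,f_{y_{q-1}}}=\VQ$ at the same cross points, I would first argue that whenever the two methods have chosen the same pivots through step $q-1$, the ACA remainder $r_{q-1}(x,y)$ and the EIM error $f_y(x)-\Inter_{q-1}[f_y](x)$ coincide as bivariate functions. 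This should follow from the fact that both are the residual after removing the (unique) interpolant of $f$ on the $(q-1)$-dimensional interpolatory system determined by $x_1,\dots,x_{q-1}$ and $y_1,\dots,y_{q-1}$.

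The heart of the argument is then to verify that the selection rules agree. Bivariate ACA with global pivoting chooses $(x_q,y_q)$ as a maximizer of $|r_{q-1}(x,y)|$ over $\Oo\times\Ot$, i.e.\ it maximizes jointly in both variables. The EIM (in the $L^\infty$-norm, $p=\infty$) instead performs a \emph{nested} maximization: first pick $y_q=\argsup{y\in\Ot}\|f_y-\Inter_{q-1}[f_y]\|_{L^\infty(\Oo)}$ by \eqref{eq:EIM_argmax1}, then $x_q=\argsup{x\in\Oo}|f_{y_q}(x)-\Inter_{q-1}[f_{y_q}](x)|$ by \eqref{eq:EIM_argmax2}. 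I would show these two procedures select the same pair. Using the remainder identity, the EIM inner objective is $\sup_x|r_{q-1}(x,y)|=\|r_{q-1}(\cdot,y)\|_{L^\infty(\Oo)}$, so $y_q$ maximizes $\sup_x|r_{q-1}(x,y)|$ over $y$, and $x_q$ then maximizes $|r_{q-1}(x,y_q)|$ over $x$. The pair $(x_q,y_q)$ thus achieves $\sup_y\sup_x|r_{q-1}(x,y)|=\sup_{(x,y)}|r_{q-1}(x,y)|$, which is exactly the joint maximum selected by ACA global pivoting. This is the standard fact that a nested supremum over a product set equals the joint supremum.

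Once the pivots agree, I would close the induction by checking that the rank-one update in part {\tt b} of ACA2, which replaces $r_{q-1}$ by $r_q$ via \eqref{eq:resid}, produces the same new approximation as the EIM basis-enrichment \eqref{eq:hq}. Here I would invoke \eqref{eq:rel_f_Int} together with the telescoping formula $\mathfrak{I}_Q[f_y](x)=\sum_{q=1}^Q r_{q-1}(x,y_q)\,r_{q-1}(x_q,y)/r_{q-1}(x_q,y_q)$ already quoted from \cite{MR2004a:65177}, and compare it against the EIM interpolant built from the normalized functions $h_q$ of \eqref{eq:hq}; both span the same space $\myspan{f_{y_1},\dots,f_{y_q}}$ and satisfy the same interpolation conditions \eqref{eq:EIM_inter} at the identical nodes $x_1,\dots,x_q$, so by uniqueness of the interpolant they must be equal.

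The main obstacle I anticipate is \emph{well-definedness of the pivot when the maximizers are non-unique}: the equality of nested and joint suprema guarantees the same optimal \emph{value}, but the $\argsup$ could a priori return different tied maximizers in the two schemes, breaking the lockstep induction. I would address this either by assuming uniqueness of the maximizers (a mild genericity hypothesis), or by observing that the resulting approximation $\mathfrak{I}_Q[f_y]$ depends only on the span $\myspan{f_{y_1},\dots,f_{y_Q}}$ and the node set $\{x_1,\dots,x_Q\}$, so that any consistent tie-breaking rule yields equivalent interpolants. A secondary subtlety is confirming that the normalization denominator $r_{q-1}(x_q,y_q)=f_{y_q}(x_q)-\Inter_{q-1}[f_{y_q}](x_q)$ appearing in \eqref{eq:hq} is nonzero precisely when the ACA pivot condition $r_{q-1}(x_q,y_q)\neq0$ holds, which again follows from the remainder identity and the assumption $q\le q^\star$.
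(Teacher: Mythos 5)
Your proposal is correct and follows essentially the same route as the paper: an induction in $q$ whose engine is the identity $r_{q-1}(x,y)=f(x,y)-\Inter_{q-1}[f_y](x)$ (from \eqref{eq:rel_f_Int} and the induction hypothesis), from which the pivot selections, the basis function \eqref{eq:hq}, and the rank-one updates are matched term by term. Your explicit justification that the EIM's nested supremum coincides with ACA's joint maximization of $|r_{q-1}|$, and your remarks on tie-breaking and the nonvanishing denominator, make precise a step the paper passes over with the single sentence that ``the selection criteria are identical.''
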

\begin{proof}
We proceed by induction. Our affirmation ${\tt A}_q$ at the $q$-th step is: 
\begin{itemize}
	\item[$({\tt A}_q)_{\rm 1}$:] The interpolation points $\{x_1,\ldots,x_q\}$ and $\{y_1,\ldots,y_q\}$ of the EIM and ACA are identical;
	\item[$({\tt A}_q)_{\rm 2}$:] $g_q(y) = r_{q-1}(x_q,y), \quad y\in\Ot,$;
	\item[$({\tt A}_q)_{\rm 3}$:] $\Inter_{q}[f_y](x) = \mathfrak{I}_q[f_y](x), \quad  (x,y)\in \Oo\times\Ot.$
\end{itemize}
\vspace*{6pt}
\noindent
{\tt 1:Induction base ($q=1$):} 
First, we note that $r_0=f$ and thus
\[
	(x_1,y_1)
	=\argsup{(x,y)\in\Oo\times\Ot}\,|r_0(x,y)|
	=\argsup{(x,y)\in\Oo\times\Ot}\,|f(x,y)|.
\]
Then, from \eqref{eq:hq} we conclude that $h_1(x) = \frac{f(x,y_1)}{f(x_1,y_1)}$ and by \eqref{eq:EIM_interp} we obtain that 
$g_1(y)=\frac{f(x_1,y)}{h_1(x_1)}=f(x_1,y)=r_0(x_1,y)$ since $h_1(x_1) = 1$.
Further, using additionally \eqref{eq:EIM_appr}, we get
\[
	\Inter_{1}[f_{y_1}](x) 
	= g_1(y)h_1(x)
	= r_0(x_1,y)\frac{f(x,y_1)}{f(x_1,y_1)}
	= \frac{r_0(x_1,y)r_0(x,y_1)}{r_0(x_1,y_1)}
	= \mathfrak{I}_1[f_y](x),
	\quad (x,y)\in\Oo\times\Ot,
\]
and ${\tt A}_1$ holds in consequence.

\vspace*{6pt}
\noindent
{\tt 2:Induction step ($q>1$):} Let us assume ${\tt A}_{q-1}$ to be true and we first note that 
\begin{equation}
	\label{eq:R_eq_Int}
	r_{q-1}(x,y) 	=  f(x,y)-\Inter_{q-1}[f_y](x)
\end{equation}
by \eqref{eq:rel_f_Int} and $({\tt A}_{q-1})_{\rm 3}$. Therefore, the selection criteria for the points $(x_q,y_q)$ are identical for the EIM wit $p=\infty$ and  the ACA with global pivoting. 
In consequence, the chosen sample points $(x_q,y_q)$ are identical.
Further, combining \eqref{eq:hq} and \eqref{eq:R_eq_Int} yields
\begin{equation}
	\label{eq:hq_eq_Rq}
	h_q(x) = \frac{f_{y_q}(x) - \Inter_{q-1} [f_{y_q}](x)}{f_{y_q}(x_q) - \Inter_{q-1} [f_{y_q}](x_q)}
	= \frac{r_{q-1}(x,y_q)}{r_{q-1}(x_q,y_q)}.
\end{equation}
By \eqref{eq:EIM_interp} for $i=q$, using that $h_q(x_q)=1$ and \eqref{eq:R_eq_Int}, we obtain $({\tt A}_q)_{\rm 2}$:
\begin{equation}
	\label{eq:gq_eq_Rq}
	g_q(y) 
	=	f(x_q,y) - \sum_{j=1}^{q-1} g_j(y) h_j(x_q)
	= f(x_q,y) - \Inter_{q-1}[f_y](x_q) 
	= r_{q-1}(x_q,y). 
\end{equation}
Finally, combining \eqref{eq:hq_eq_Rq} and \eqref{eq:gq_eq_Rq} in addition to $({\tt A}_{q-1})_{\rm 3}$, we conclude that
\[
	\Inter_{q}[f_y](x) 
	= \Inter_{q-1}[f_y](x)   +  g_q(y) h_q(x)
	= \mathfrak{I}_{q-1}[f_y](x)  + r_{q-1}(x_q,y)\frac{r_{q-1}(x,y_q)}{r_{q-1}(x_q,y_q)}
	= \mathfrak{I}_{q}[f_y](x) 
\]
and the proof is complete.
\end{proof}

\section{Gappy POD}
In the following, we present a completion to the POD method called {\it gappy POD}  \cite{everson1995karhunen, Willcox:2006gd,bui2004aerodynamic} or {\it missing point estimation} \cite{Astrid:da}.
We refer to it as the gappy POD in the following.
It is a projection based method (thus not an interpolation based method although in some particular cases it can be interpreted as an interpolation scheme).
However, the projection matrix is approximated by a low-rank approximation that in turn is based on partial or incomplete (``gappy'') data of the functions under consideration. 
In a first turn, we present the method as introduced in \cite{Willcox:2006gd,bui2004aerodynamic} and we generalize it in a second turn.

\subsection{The gappy POD algorithm}
We start from the conceptual idea that a set of basis functions $\{h_1,\ldots,h_Q\}$, that can --- but does not need to --- be obtained through a POD procedure, is given.
We first introduce the idea of gappy POD in the context of Remark \ref{rem:SVD} where functions are represented by a vector containing its pointwise values on a given grid $\Oot=\{\hat x_1,\ldots,\hat x_M\}$. We remind that the projection $\proj_Q[f_y]$ of $f_y$ with $y\in\Ot$ onto the space spanned by $\{h_1,\ldots,h_Q\}$ is defined by 
\[
	(\proj_Q[f_y],h_q)_{\Oot} = (f_y,h_q)_{\Oot},\quad  q=1,\ldots,Q.
\]
Next, assume that we only dispose of some incomplete data of $f_y$. That is, we 
are given say $L$ ($< M$) distinct points $\{x_1,\ldots,x_L\}$ among $\Oot$ where $f_y(x_i)$ is available.
Then, we define the gappy scalar product by 
\[
			(v,w)_{L,\Oot} = \frac{|\Oo|}{L}\sum_{i=1}^{L} v(x_i)\,w(x_i),
\]
which only takes into account available data of $f_y$. 
We can compute the gappy projection defined by
\[
	(\proj_{Q,L}[f_y],h_q)_{L,\Oot} = (f_y,h_q)_{L,\Oot},\quad  q=1,\ldots,Q.
\]
Observe that the basis functions $\{h_1,\ldots,h_Q\}$ are no longer orthonormal for the gappy scalar product and 
that the stability of the method mainly depends on the properties of the mass matrix $\rM_{h,L}$ defined by
\[
	(\rM_{h,L})_{i,j} = (h_j,h_i)_{L,\Oot}.
\]

To summarize, in the above presentation we assumed that the data of $f_y$ at some given points was available and then defined a ``best approximation'' with respect to the available but incomplete data. For instance, the data can be assimilated by physical experiments and the gappy POD allows to reconstruct the solution in the whole domain $\Oot$ assuming that it can be accurately represented by the basis functions $\{h_1,\ldots,h_Q\}$.

We now change the viewpoint and ask the question: If we can place $L$ sensors at the locations $\{ x_i\}_{i=1}^L\subset \Oo$ at which we have access to the data $f_y( x_i)$ (through measurements), where would we place the points $\{ x_i\}_{i=1}^L$?

One might consider different criteria to chose the sensors. In \cite{Willcox:2006gd} the placement of $L$ sensors is stated as a minimization problem
\begin{align*}
	\min \kappa(\rM_{h,L}) \quad 
	 \mbox{ where }\quad
	  \rM_{h,L}\mbox{ is based on $L$ points }\{x_1,\ldots,x_L\}
\end{align*}
and $\kappa(\rM_{h,L})$ denotes the condition number of $\rM_{h,L}$.
We now report a slight modification of the algorithm presented in \cite{Willcox:2006gd,Astrid:da} to construct a sequence of sensor placements $\{x_1,\ldots,x_L\}$ (with $L\ge Q$) based on an incremental greedy algorithm.

\mybox{
Sensor placement algorithm with gappy POD and minimal condition number
}
{
For $1\le l\le L$: 
\[
	x_{l} = \argmin{x\in\Oo} \kappa(\rM_{h,l}(x))
\]
where 
\[
	(\rM_{h,l}(x))_{i,j} = \frac{|\Oo|}{l}\left[ 
	\sum_{k=1}^{l-1} f_i(x_k)f_j(x_k) + f_i(x) f_j(x)
	\right],\quad 1\le i,j \le \min(Q,l).
\]
}
This natural algorithm actually seems to have some difficulties at the beginning, for small values of $l$. It is thus recommended to start with this other algorithm

\mybox{
Sensor placement algorithm with gappy POD and minimal error
}
{
For $1\le l\le L$: 
\[
	x_{l} = \argmax{x\in\Oo} \| \proj_{Q,l-1} [f_y] - f_y  \|_{L^p(\Oo)}
\]
where $\proj_{Q,l-1} [f_y] $ is the gappy projection of $f_y$ onto the span of \{$h_1,\ldots,h_{l-1}\}$ based on the pointwise information at $\{x_1,\ldots,x_{l-1}\}$.
}

This criterion is actually the one that is used in the gappy POD method presented in \cite{carlberg2013gnat} in the frame of the GNAT approach that allows a stabilized implementation of the gappy method for a challenging CFD problem.
Further, we have the following link between the gappy projection and the EIM as noticed in \cite{galbally2010non}.

\begin{lem}
Let $\{h_1,\ldots,h_Q\}$ and  $\{x_1,\ldots,x_Q\}$ be given basis functions and interpolation nodes.
If the interpolation is well-defined (i.e.\ the interpolation matrix being invertible), then the interpolatory system based on the basis functions $\{h_1,\ldots,h_Q\}$ and the interpolation nodes $\{x_1,\ldots,x_Q\}$ is equivalent to the 
gappy projection system based on the basis functions $\{h_1,\ldots,h_Q\}$ with available data at the points $\{x_1,\ldots,x_Q\}$, that is, for any $y\in\Ot$ the unique interpolant $\Inter_Q [f_y]\in \myspan{h_1,\ldots,h_Q}$  such that
\begin{equation}
	\label{eq:IntGappy}
	\Inter_Q [f_y](x_q) = f_y(x_q), \quad q=1,\ldots,Q,
\end{equation}
is equivalent to the unique gappy projection $\proj_{Q,L}[f_y]$ defined by
\begin{equation}
	\label{eq:ProjGappy}
	(\proj_{Q,L}[f_y],h_q)_{Q,\Oo} = (f_y,h_q)_{Q,\Oo}, \quad q=1,\ldots,Q.
\end{equation}
\end{lem}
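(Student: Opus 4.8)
The plan is to reduce both characterizations to linear systems for the coordinate vector of a common ansatz and then observe that these systems coincide. Both $\Inter_Q[f_y]$ and $\proj_{Q,L}[f_y]$ are sought in $\VQ=\myspan{h_1,\dots,h_Q}$, so I would write each as $\sum_{j=1}^Q c_j h_j$ and work with the unknown vector $c=(c_1,\dots,c_Q)^T$. Setting ${\rm B}_{ij}=h_j(x_i)$ and $({\rm f})_i=f_y(x_i)$, the interpolation conditions \eqref{eq:IntGappy} become the square system ${\rm B}\,c={\rm f}$, whose unique solvability is precisely the standing hypothesis that the interpolation matrix is invertible.

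Next I would rewrite the gappy projection conditions \eqref{eq:ProjGappy} in the same coordinates, using that $L=Q$ and that the available data sit at the interpolation nodes, so $(v,w)_{Q,\Oo}=\tfrac{|\Oo|}{Q}\sum_{i=1}^Q v(x_i)w(x_i)$. Then the gappy mass matrix has entries $(\rM_{h,L})_{q,j}=(h_j,h_q)_{Q,\Oo}=\tfrac{|\Oo|}{Q}\sum_{i=1}^Q h_q(x_i)h_j(x_i)$, which is exactly $\rM_{h,L}=\tfrac{|\Oo|}{Q}\,{\rm B}^T{\rm B}$, while the right-hand side is $(f_y,h_q)_{Q,\Oo}=\tfrac{|\Oo|}{Q}({\rm B}^T{\rm f})_q$. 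Hence \eqref{eq:ProjGappy} is nothing but the normal-equation system ${\rm B}^T{\rm B}\,c={\rm B}^T{\rm f}$.

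The crux of the equivalence is then a one-line observation: because ${\rm B}$ is square and invertible, ${\rm B}^T$ is invertible too, so left-multiplying ${\rm B}^T{\rm B}\,c={\rm B}^T{\rm f}$ by $({\rm B}^T)^{-1}$ returns exactly ${\rm B}\,c={\rm f}$. The two systems therefore have the same unique solution $c$, giving $\Inter_Q[f_y]=\proj_{Q,L}[f_y]$ for every $y\in\Ot$. The same factorization also certifies well-posedness on the gappy side, since ${\rm B}^T{\rm B}$ is positive definite exactly when ${\rm B}$ has full column rank, so the gappy bilinear form is a genuine inner product on $\VQ$ and the projection is indeed uniquely defined.

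I expect the only real obstacle to be conceptual rather than computational: one must recognize that taking exactly $L=Q$ sample points collapses the usual least-squares/projection picture to plain interpolation, so that the normal equations encode no information beyond ${\rm B}\,c={\rm f}$. The minor points to be careful about are stating the identity for each fixed $y\in\Ot$ and invoking invertibility of ${\rm B}$ both for the interpolation system and, through ${\rm B}^T{\rm B}$, for the well-posedness of the gappy projection; no genuine estimate is required.
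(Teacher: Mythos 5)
Your proof is correct and rests on the same key observation as the paper's: with $L=Q$ the gappy projection equations are exactly ${\rm B}^T$ applied to the interpolation equations, so invertibility of ${\rm B}$ (hence of ${\rm B}^T$) makes the two systems equivalent. The paper phrases this at the level of nodal values (multiplying \eqref{eq:IntGappy} by $h_i(x_q)$ and summing, then undoing this with the columns of ${\rm B}^{-1}$) rather than via the coordinate vector and the normal equations ${\rm B}^T{\rm B}\,c={\rm B}^T{\rm f}$, but this is only a cosmetic difference.
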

\begin{proof}
	Multiply \eqref{eq:IntGappy} by $\frac{|\Oo|}{Q} h_i(x_q)$ and take the sum over all $q=1,\ldots,Q$ to obtain
	\[
		\frac{|\Oo|}{Q} \sum_{q=1}^Q\Inter_Q [f_y](x_q) h_i(x_q)= \frac{|\Oo|}{Q} \sum_{q=1}^Qf_y(x_q) h_i(x_q), \quad i=1,\ldots,Q,
	\]
	which is equivalent to $(\Inter_Q [f_y],h_i)_{L,\Oot} = (f_y,h_i)_{L,\Oot}$ for all $i=1,\ldots,Q$.
	On the other hand, if $\proj_{Q,L}[f_y]$ is the solution of \eqref{eq:ProjGappy}, then there holds that 
	\begin{equation}
		\label{eq:ProjIntGappy}
		\sum_{q=1}^Q\proj_{Q,L} [f_y](x_q) h_i(x_q)=  \sum_{q=1}^Qf_y(x_q) h_i(x_q), \quad i=1,\ldots,Q.
	\end{equation}
	Since the interpolating system is well-posed, the interpolation matrix ${\rm B}_{i,j} = h_j(x_i)$ is invertible and thus there exists a vector ${\rm u}_j$ such that ${\rm B}\, {\rm u}_j = \re_j$ for some $j=1,\ldots,Q$ where $\re_j$ is the canonical basis vector. Then, multiply \eqref{eq:ProjIntGappy} by $({\rm u}_j)_i$ and sum over all $i$:
\[
			\sum_{i,q=1}^Q\proj_{Q,L} [f_y](x_q)\, ({\rm u}_j)_i \,{\rm B}_{qi}=  \sum_{i,q=1}^Qf_y(x_q) \,({\rm u}_j)_i \,{\rm B}_{q,i}, \quad j=1,\ldots,Q,
\]
to get
\[
			\proj_{Q,L} [f_y](x_j)=  f_y(x_j) , \quad j=1,\ldots,Q.
\]
Thus, the gappy projection satisfies the interpolation scheme.
\end{proof}

One feature of the sensor placement algorithm based on the gappy POD framework is that the basis functions $\{h_1,\ldots,h_q\}$ are given and the sensors are chosen accordingly.
As a consequence of the interpretation of the gappy projection as an interpolation scheme if the number of basis functions and sensors coincide, one might combine the gappy POD approach with the EIM in the following way in order to construct basis functions and redundant sensor locations simultaneously:
\begin{itemize}
\item[\tt 1.] 
Use the EIM to construct simultaneously $Q$ basis functions $\{h_q\}_{q=1}^Q$ and interpolation points $\{x_q\}_{q=1}^Q$ until a sufficiently small error is achieved;
\item[\tt 2.]
Use the gappy projection framework as outlined above to add interpolation points (sensors) to enhance the stability of the scheme. 
\end{itemize}

\subsection{Generalization of gappy POD}
In the previous algorithm the functions were represented by their nodal values at some points $\hat x_1,\ldots,\hat x_M$.
That is, we can introduce for each point $\hat x_i$ a functional $\hat\sigma_i=\delta_{\hat x_i}$ ($\delta_x$ denoting the Dirac functional associated to the point $x$) such that the interpolant of any continuous function $f$ onto the space $\VN$ of piecewise linear and globally continuous functions can be written as
\[
	\sum_{m=1}^M \hat\sigma_m(f) \, \hat\varphi_m,
\]
where $\{\hat\varphi_m\}_{m=1}^M$ denotes the Lagrange basis of $\VN$ with respect to the points $\hat x_1,\ldots,\hat x_M$.

We present now a generalization where we allow a more general discrete space $\VN$. 
Therefore, let
$\VN$ be a $\cN$-dimensional discrete space spanned by a set of basis functions $\{\hat\varphi_i\}_{i=1}^\cN$ such as for example the finite element hat-functions, Fourier-basis or polynomial basis functions.
In the context of the theory of finite elements, c.f.~\cite{ciarlet1978finite}, we are given $M$ functionals $\{\hat\sigma_m\}_{m=1}^M$, associated with the basis set $\{\hat\varphi_i\}_{i=1}^\cN$, which determine the degrees of freedom of a function.
That is, for $f$ regular enough such that all degrees of freedom $\hat\sigma_m(f) $ are well-defined, the following interpolation scheme 
\[
	f \to \sum_{m=1}^M \hat\sigma_m(f) \hat\varphi_m
\] 
defines a function in $\VN$ that interpolates the degrees of freedom.

We start with noting that the scalar product between two functions $f,g$ in $\VN$ is given by
\[
	(f,g)_\Oo = \sum_{n,m=1}^\cN \hat\sigma_n(f) \, \hat\sigma_m(g) \,(\hat\varphi_n,\hat\varphi_m)_\Oo.
\]

In this framework, the meaning of ``gappy'' data is generalized. We speak of gappy data if only partial data of degrees of freedom, i.e.\ the $\hat\sigma_m(f)$ is available. Thus, in this generalized context, the degrees of freedom are not necessarily nodal values, i.e.\ the functionals being Dirac functionals, and depend on the choice of the basis functions.

Assume that we are given $Q$ basis functions $h_1,\ldots,h_Q$ that describe a subspace in $\VN$ and $L\ge Q$ degrees of freedom $\sigma_l = \hat \sigma_{i_l}$, for $l=1,\ldots,L$ (chosen among all $M$ degrees of freedom $\hat\sigma_1,\ldots,\hat\sigma_L$). 
Denoting by $\varphi_l=\hat\varphi_{i_l}$ the corresponding $L$ basis functions, we then define a gappy scalar product
\[
	(f,g)_{L,\Oo} = \frac{M}{L}\sum_{l,k=1}^L \sigma_l(f) \, \sigma_k(g) \,(\varphi_{l},\varphi_{k})_\Oo.
\] 
Given any $f_y$, $y\in\Ot$, the gappy projection $\proj_{Q,L}[f_y]\in \myspan{h_1,\ldots,h_Q}$ is defined by
\[
	(\proj_{Q,L}[f_y],h_q)_{L,\Oo}  = (f_y,h_q)_{L,\Oo}, \quad  q=1,\ldots,Q.
\] 
Then, the sensor placement algorithm introduced in the previous section can easily be generalized to this setting.

\begin{remark}
If the mass matrix $(\hat\rM)_{i,j} = (\hat\varphi_j,\hat\varphi_i)_\Oo$ associated with the basis set $\{\hat\varphi_i\}_{i=1}^\cN$  satisfies the following orthogonality property $(\hat\rM)_{i,j} = (\hat\varphi_j,\hat\varphi_i)_\Oo = \frac{|\Oo|}{\cN} \delta_{ij}$ either by construction of the basis functions or using mass lumping (in the case of finite elements) and if the basis functions $\{\hat\varphi_i\}_{i=1}^\cN$ are nodal basis functions associated with the set of points $\Oot=\{\hat x_1,\ldots,\hat x_M\}$, then the original gappy POD method is established.
\end{remark}

\begin{remark}
If the mass matrix $(\hat\rM)_{i,j} = (\hat\varphi_j,\hat\varphi_i)_\Oo$ associated with the selected functions $\{\varphi_i\}_{i=1}^L$ is orthonormal, then
the gappy projection $\proj_{Q,L}[f_y]$ is solution to the following quadratic minimization problem
\[
	\min_{f\in \VQ} \sum_{l=1}^L |\sigma_l(f_y)-\sigma_l(f)|^2.
\]
Since $L>Q$ in a general setting, this means that the gappy projection fits the selected degrees of freedom optimally in a least-squares sense.
In the general case, $\proj_{Q,L}[f_y]$ is solution to the following minimization problem
\[
	\min_{f\in \VQ} \sum_{l,k=1}^L (\sigma_l(f_y)-\sigma_l(f)) (\varphi_l,\varphi_m)_{\Oo} (\sigma_k(f_y)-\sigma_k(f)).
\]
\end{remark}


\bibliographystyle{abbrv}
\bibliography{Comp,Comp2}

\end{document}